\DeclareMathOperator\supp{supp}
\newtheorem{definition}{Definition}
\newtheorem{Lemma}{Lemma}
\newtheorem{Proposition}{Proposition}
\newtheorem{Theorem}{Theorem}
\newtheorem{Remark}{Remark}
\newcommand{\re}{\mathbb{R}}
\newcommand{\veps}{\varphi}
\newcommand{\fe}{f_{\epsilon}}
\newcommand{\pa}{\partial}
\newcommand{\nt}{\widetilde{\eta}}
\newcommand{\sign}{\text{sign}}
\DeclareSymbolFont{rsfs}{U}{rsfs}{m}{n}
\DeclareSymbolFontAlphabet{\mathscrsfs}{rsfs}
\title{A Note on $L^1-$contractive property of the solutions of the scalar conservation laws through the method by Lax-{O}le\u{\i}nik. }
\author{Abhishek Adimurthi.}
\address{Department of Mathematics, Indiana University, 
Bloomington, IN 47405, USA}
\email{abadim@iu.edu,abhishek.adimu@gmail.com}
\begin{document}
\nocite{*}

\begin{abstract}
    In this note, we study the $L^1-$contractive property of the solutions the scalar conservation laws, got by the method of Lax-{O}le\u{\i}nik. First, it is proved when f is merely convex and the initial data is in $L^{\infty}(\re)$. And then, it is shown for the case when the initial data is in $L^1(\re)$ with the convex flux having super-linear growth. Finally, the $L^1-$contractive property is shown for the scalar conservation laws with the initial data in $L^1(\re)$ and the flux is ``semi-super-linear". This entire note does not assume any results mentioned through the approach by Kruzkov.
\end{abstract}

\maketitle
\pagestyle{myheadings}
\markright{A Note on $L^1$ contraction of Lax-{O}le\u{\i}nik solution of the SCL.}

\section*{\bf Introduction.}

Let $f : \re \mapsto \re $ be real-valued function , $u_0 \in L^{\infty}(\re)$, for which let $u$ in $L^{\infty}(\re \times (0,\infty))$ be a weak solution of the scalar conservation law,
\begin{equation}\label{SC1}
\left\{ 
\begin{aligned}
    \frac{\pa}{\pa t}u + \frac{\pa}{\pa x}\big[f(u)\big] &= 0 ; \quad x \in \re, t>0, \\
    u(x,0) &= u_0(x) ; \quad x \in \re,
\end{aligned} 
\right.
\end{equation}

i.e, in the weak sense, one can write (\ref{SC1}) as the following integral system:

\begin{align}
\label{weak}
\int_0^{\infty}\int_{-\infty}^{+\infty} \left(u \varphi_t + f(u) \varphi_x \right) dx dt + \int_{-\infty}^{\infty}u_0(x) \varphi(x,0)\  dx = 0,
\end{align}
for all test functions $\varphi \in C_c^{\infty}(\re \times [0,\infty))$, with $\varphi_t = \frac{\pa}{\pa t} \varphi$ and $\varphi_x = \frac{\pa}{\pa x} \varphi$.
In general, (\ref{weak}) can admit many solutions. A question of interest to ask here is for what set of functions does (\ref{weak}) admit a unique solution?

It is mentioned in \cite{Evans} that if the function $f$ is taken to be uniformly convex, i.e $f \in C^2(\re)$ and there exists $C>0$ such that $f''(x) \geq C$ , for all $x \in \re$, then by the \cite{Lax} and \cite{Oleinik}, an explicit solution is obtained by looking at a corresponding Hamilton Jacobi Equation.

The explicit formula then gives {O}le\u{\i}nik-one-sided inequality : 
\begin{equation}\label{OOS}
    u(x+z,t) - u(x,t) \leq C( 1 +t^{-1})z,
\end{equation}
for some $C \geq 0$ and for a.e $x \in \re$ , $t>0$, $z >0$.

It is shown in \cite{Oleinik} that there exist a unique solution in terms of (\ref{weak}), i.e if $u_1$ and $u_2$ satisfy the {O}le\u{\i}nik-one-sided inequality and have the same initial condition, then $u_1 = u_2$ a.e.

On the other hand, using the vanishing viscosity method, it is proved in \cite{Kruzkov} that the PDE (\ref{SC1}) attains a weak solution and satisfy certain integral inequalities as mentioned in the \cref{defn1}.

\begin{definition}\label{defn1}
  Fix $T > 0$, for which define $\pi_T := \re \times (0,T)$. A bounded measurable function $u : \pi_T \mapsto \re $ is called generalised entropy solution (in the sense of Kru\v{z}kov) of the PDE (\ref{SC1}) if the following holds,
  \begin{itemize}
      \item For any constant $K \in \re$ and for any non-negative test function $\varphi \in C_c^{\infty}(\pi_T)$, there holds the inequality 
\begin{equation}
    \label{kruz1}
    \int_{\pi_T} \Big[|u-K| \varphi_t + \sign(u-k)\left[f(u) - f(K)\right]\varphi_x \Big]dx dt \geq 0.
\end{equation}
\item The function $u(t,.)$ converges to $u_0$ as $t \rightarrow 0^+$ in the topology of $L^1_{loc}(\re)$, i.e,

\begin{equation}
\forall [a,b]\subset \re, \quad \lim_{t\rightarrow 0^+}\int_a^b | u(x,t) - u_0(x)| dx = 0.
\end{equation}

  \end{itemize}
 \end{definition}
  
 It's shown in \cite{Kruzkov} that if $u_1$ and $u_2$ are two generalised entropy solution (in the sense of Kru\v{z}kov), with initial data $u_{10}$ and $u_{20}$, then for a.e $t>0$, there holds that 
\begin{equation}
    \label{eq6}
    \forall a<b, \quad \int_a^b |u_1(x,t) - u_2(x,t)| dx \leq \int_{a-Lt}^{b+Lt}|u_{10}(x) - u_{20}(x)| dx,
\end{equation}
  where \[L:= \lim_{\epsilon\rightarrow 0}\text{essup}\Big\{ |f'(p)| ; p \in I_{\epsilon}\Big\},\] with
  \[I_{\epsilon} := [-max (\lVert u_{10} \rVert_{\infty} , \lVert u_{20} \rVert_{\infty}) - \epsilon , max (\lVert u_{10}\rVert_{\infty} , \lVert u_{20}\rVert_{\infty}) + \epsilon]. \]
\begin{definition}\label{defn2}
  The property mentioned in \cref{eq6} above is referred to as the $L^1$ contractive property of solutions.
\end{definition}

The approach in \cite{Kruzkov} has advantages over the one by Lax-{O}le\u{\i}nik as,
\begin{itemize}
    \item In \cite{Kruzkov}, $f$ is assumed to be just a  local lipshitz function.
    \item The method mentioned in \cite{Kruzkov} works in any dimension.
\end{itemize}

In this note, we always will assume that $f$ is convex. It is shown in \cite{Hoff} (see also \cite{MR2169977}, \cite{MR1304494}, \cite{MR688146}) that if the function $f$ is uniformly convex and $C^4$, then there exists $C>0$ such that for a.e $x \in \re,z >0, t>0$, there holds,
\[
u(x+z,t) - u(x,t) \leq  \frac{Cz}{t}, 
\]
where $u$ is a solution obtained by \cite{Kruzkov}. By the uniqueness result by Oleinik, the solution obtained by the method of \cite{Kruzkov} and by the method of \cite{Oleinik} are the same. Furthermore, using Oleinik's idea, \cite{Hoff} proves that if $f$ is $C^1$ and strictly convex, then 
for a.e $x\in \re, z>0, t>0$, there holds
\begin{equation}\label{olp1}
f'\left(u(x+z,t)\right)- f'\left(u(x,t)\right)\leq \frac{z}{t}.
\end{equation}
Moreover, \cite{Hoff} shows that if $u$ and $w$ are two weak solutions to the scalar conservation laws with the same initial data and satisfy \cref{olp1} for $f$ to be $C^1$ and strictly convex, then $u= w$ for a.e $(x,t) \in \re\times(0,\infty)$.

However, in this note, we look into answering the following questions with the assumption that the flux function $f$ is just convex :
\begin{enumerate}
    \item \label{Q2} Assume that the initial data $u_0$ is in $L^{\infty}(\re)$. Suppose the regularity condition on the function $f$ is relaxed i.e. there is no assumption made on the differentiability of the function $f$, does the method by Lax-{O}le\u{\i}nik through Hamilton Jacobi system provide a weak solution to the PDE (\ref{SC1})?
    \item\label{Q4} Suppose $u_{10}, u_{20}$ are in $L^{\infty}(\re)$. Consider the scalar conservation law with two different initial conditions, 
    \begin{equation}\label{SC2.1}
\left\{ \begin{aligned}
    \frac{\pa}{\pa t}u + \frac{\pa}{\pa x}\big[f(u)\big] &= 0 ; \quad x \in \re, t>0, \\
    u(x,0) &= u_{10}(x) ; \quad x \in \re,
\end{aligned} \right.
\end{equation}

\begin{equation}\label{SC2.2}
\left\{ \begin{aligned}
    \frac{\pa}{\pa t}u + \frac{\pa}{\pa x}\big[f(u)\big] &= 0 ; \quad x \in \re, t>0, \\
    u(x,0) &= u_{20}(x) ; \quad x \in \re.
\end{aligned} \right.
\end{equation}
Let the weak solutions (satisfying \cref{weak}) to the scalar conservation laws \cref{SC2.1} and \cref{SC2.2} be denoted by $u_1$ and $u_2$ respectively. Can we obtain $L^1$ contractive property for the solutions of these two scalar conservation laws, obtained through the method of Lax-{O}le\u{\i}nik? That is, for a.e $t>0$, is \cref{eq6} true i.e
    \[ \int_a^b |u_1(x,t) - u_2(x,t)| dx \leq \int_{a-Lt}^{b+Lt} |u_{10}(x) - u_{20}(x) | dx? \]
    \item \label{Q3} Now, suppose that $u_{10}, u_{20}$ are in $L^1(\re)$. Also, assume that the flux $f$ is super linear. Consider the scalar conservation law with two different initial conditions,
        \begin{equation}\label{SC3.1}
\left\{ \begin{aligned}
    \frac{\pa}{\pa t}u + \frac{\pa}{\pa x}\big[f(u)\big] &= 0 ; \quad x \in \re, t>0, \\
    u(x,0) &= u_{10}(x) ; \quad x \in \re,
\end{aligned} \right.
\end{equation}

\begin{equation}\label{SC3.2}
\left\{ \begin{aligned}
    \frac{\pa}{\pa t}u + \frac{\pa}{\pa x}\big[f(u)\big] &= 0 ; \quad x \in \re, t>0, \\
    u(x,0) &= u_{20}(x) ; \quad x \in \re.
\end{aligned} \right.
\end{equation}
Let the weak solutions (satisfying \cref{weak}) to the scalar conservation laws \cref{SC3.1} and \cref{SC3.2} be denoted by $u_1$ and $u_2$ respectively. Can we obtain $L^1$ contractive property for the solutions of these two scalar conservation laws, obtained through the method of Lax-{O}le\u{\i}nik?  Furthermore, for a.e $t>0$, does the weak solutions $u_1$ and $u_2$ satisfy 
    \[ 
    \int_{\re} |u_1(x,t) - u_2(x,t)| dx \leq \int_{\re} |u_{10}(x) - u_{20}(x) | dx ? \]

\item\label{Q5} Is the (Q.\ref{Q3}) true, when the flux is convex, but a relaxation is made on the super-linearity of the flux function $f$ and the function $f$ is assumed to be ``semi-super-linear" i.e.,
\[\lim_{p\rightarrow \pm\infty} \frac{f(p)}{p} = \mu_{\pm} \in [-\infty,+\infty], \text{ with } \mu_- \leq 0 \leq \mu_+?
\]

\end{enumerate}

In this note, we show that there are (weak) solutions that answer questions (Q.\ref{Q2}), (Q.\ref{Q4}),  (Q.\ref{Q3}) and (Q.\ref{Q5}) affirmatively (see \cref{thm1}, \cref{thm2} and \cref{thm3}). For the scalar conservation law with the initital data taken to be in $L^{\infty}(\re)$, the \cref{remark1} mentioned below tells that if the function $f$ is just taken to be convex, we can as well assume that $f$ can be convex and super-linear.  There are three main theorems mentioned in this note. The \cref{thm1} tells about establishing the $L^1$ contractivity for a scalar conservation law with the flux being just convex and super-linear and the initial data is in $L^{\infty}(\re)$. And therefore, $L^1-$contractive property holds when the flux is assumed to be just convex and the initial data is in $L^{\infty}(\re)$, by the \cref{remark1}. Then, the \cref{thm2} tells that similar results can be established for the initial conditions in the space $L^1(\re)$, but for the flux to be taken as super-linear and convex. Finally, a similar set of results is proved in \cref{thm3} for the case when the flux is convex, but a relaxation is made on the super-linearity of the flux function $f$ i.e. 
\[\lim_{p\rightarrow \pm\infty} \frac{f(p)}{p} = \mu_{\pm} \in [-\infty,+\infty], \text{ with } \mu_- \leq 0 \leq \mu_+. \]

 The approach in this note does not use any results from \cite{Kruzkov}. Moreover, either in \cite{Hoff} or in \cite{Oleinik}, $L^1-$contraction property for the solutions is not shown. So, independent to \cite{Kruzkov}, we plan on proving the $L^1$ contraction property for the scalar conservation laws with the flux function $f$ to be convex and having no conditions on it's regularity and thereby, establishing uniqueness.

In order to state the main results, we mention some notations.

\section*{\bf Notations.}\label{prelim}

For $u_0 \in L^{\infty}(\re)$, we define $v_0$ to be the primitive of $u_0$ as
\begin{align}\label{eq8}
    v_0(x) := \int_0^x u_0(t) \ dt.
\end{align}
Clearly, the function $v_0$ satisfy 
\begin{equation}
    |v_0(x) - v_0(y)| \leq \lVert u_0 \rVert_{\infty} |x-y|,
\end{equation}
and hence, $v_0$ is a lipshitz function with lipshitz constant $lip(v_0) \leq \lVert u_0\rVert_{\infty}$.

Let $f^*$ denote Fenchel dual of $f$, which is given by
\begin{equation}\label{eq11}
    f^*(q) := \sup \{ pq - f(p) ; p \in \re\}.
\end{equation}

Let $\eta_{\epsilon}$ be the mollifying sequence and define \[
f_{\epsilon}(x) := f\ast \eta_{\epsilon}(x) = \int_{\re} f(y) \eta_{\epsilon}(x-y) dy.
\]

Let $ 0 \leq s < t$, for which we define some functions as follows :
\begin{align}\label{eq13}
    \begin{split}
        V(x,t) &:= \inf_{y \in \re}\left\{ v_0(y) + t f^*\left(\frac{x-y}{t}\right)\right\}, \\
        V(x,s,t) &:= \inf_{y \in \re}\left\{ V(y,s) + (t-s) f^*\left(\frac{x-y}{t-s}\right)\right\}, \\
        Ch(x,s,t) &:= \left\{ \text{ all minimizers in the definition of $V(x,s,t)$ }\right\} ,\\
        Ch(x,t) &:= Ch(x,0,t).
    \end{split}
\end{align}
The function $V$ is called as the value function for the flux function $f$ and the set $Ch$ is called the charecteristic set.

For each $t>0$, $x \in \re$, define the functions $y_{+}$ and $y_-$ as
\begin{align}
    \begin{split}\label{eq1011}
        y_+(x,t) &:= \sup \{ y ; y \in Ch(x,t)\},\\
        y_-(x,t) &:= \inf \{ y ; y \in Ch(x,t)\}.
    \end{split}
\end{align}
\section*{\bf The Main Theorems.}

\begin{Remark}\label{remark1}
Owing to the \cref{thm1} mentioned below, if the function $f$ is convex and super-linear, we see that the solution is given by $\frac{\pa V(x,t)}{\pa x}$ which is bounded by $Lip(v_0)$, due to Rademacher's theorem. Therefore, noting that if $u_0 \in L^{\infty}(\re)$ is fixed and if $u$ is a weak solution as in (\ref{weak}) and $g:\re \mapsto \re$ be any continuous function such that $f(p) = g(p)$, for all 
\[
|p| \leq \lVert u \rVert_{\infty} \leq Lip(v_0) \leq \lVert u_0 \rVert_{\infty},
\]
then $u$ is also a weak solution of 
\begin{equation}\label{SC2}
\left\{ \begin{aligned}
    \frac{\pa}{\pa t}u + \frac{\pa}{\pa x}\big[g(u)\big] &= 0 ; \quad x \in \re, t>0, \\
    u(x,0) &= u_0(x) ; \quad x \in \re.
\end{aligned}\right.
\end{equation}

Hence, we can change the function $f$ which is just assumed to be convex, such that outside the interval , 
\[
[ -\lVert u_0 \rVert_{\infty} - 1,\lVert u_0 \rVert_{\infty} + 1],
\]
the ratio $f(p)/|p|$ blows up [refer (\ref{app1}$^{\text{st}}$) part of the Appendix]. Hence, we can assume that $f$ is convex and has super-linear growth, i.e,
\begin{equation}
    \lim_{|p| \rightarrow \infty} \frac{f(p)}{|p|} = \infty.
\end{equation}
The property of super-linearity of the function $f$ ensures that the Fenchel dual of $f$ is finite.
\end{Remark}

\begin{Theorem}\label{thm1} 
Let $u_0$ be a function in $L^{\infty}(\re)$. Define the primitive of $u_0$ as in \cref{eq8}. Let $f:\re \mapsto \re$ be a convex function. By the \cref{remark1}, we can assume that $f$ is convex and super-linear. Furthermore, define the Fenchel dual of $f$ as in \cref{eq11}. Also, define the value functions and the charecterstic sets as in \cref{eq13}. Then, there holds the following:
\begin{enumerate}
    \item The function $V$ is a lipshitz function with the property : 
    
    $\forall x, y \in \re , t>0$, we have
    \begin{equation}
        |V(x,t) - V(y,t)| \leq Lip(v_0)  |x-y|.
    \end{equation}
    \item The function $V$ satisfy the dynamic programming principle (ddp) i.e.
    \begin{equation}
        V(x,s,t) = V(x,t).
    \end{equation}
    \item The function $V$ is a viscosity solution of the Hamilton Jacobi system
    \begin{equation}
        \left\{\begin{aligned}
            V_t + f(V_x) &= 0, \quad x \in \re, t>0 \\
            V(x,0) &= v_0(x), \quad x \in \re.
        \end{aligned}\right.
    \end{equation}
    \item The function $u(x,t) := \frac{\pa}{\pa x}V(x,t)$ is a weak solution to the PDE (\ref{SC1}) with $\lVert u \rVert_{\infty} \leq \lVert u_0\rVert_{\infty }$.
    \item{$L^1$ Contractivity :} Let $u_{10} ,u_{20} \in L^{\infty}(\re)$ and set $L$ as in (\ref{eq6}).
    Let $u_1$ and $u_2$ be two weak solutions to the PDE (\ref{SC1}) with initial data $u_{10}$ and $u_{20}$, then for a.e $t>0$ and for $a < b$, we have
    \begin{equation}\label{eq17}
        \int_a^b |u_1(x,t) - u_2(x,t)|dx \leq \int_{a - L t}^{b+L t} |u_{10}(x) - u_{20}(x)|dx.
    \end{equation}
\end{enumerate}
\end{Theorem}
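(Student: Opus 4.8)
\emph{Strategy and reductions.} The plan is to work directly with the explicit Lax--Ole\u\i nik representation and to track backward characteristics. By \cref{remark1} I may assume $f$ convex and super-linear; I would further mollify, replacing $f$ by $f_\delta := f*\eta_\delta + \delta p^2$, which is $C^\infty$, uniformly convex ($f_\delta'' \ge 2\delta$) and super-linear, so that $f_\delta^*$ is $C^2$ with bounded second derivative. For such $f_\delta$ one has (from parts (1)--(4) and the Ole\u\i nik-type one-sided bound, cf.\ \cref{olp1}, valid here with a $\delta$-dependent constant) that $u_i^\delta(\cdot,t):=\partial_x V_i^\delta(\cdot,t)\in BV_{loc}(\re)$ and that the (unique, for a.e.\ $x$) backward characteristic foot is $y_i^\delta(x,t)=x-t\,f_\delta'\!\big(u_i^\delta(x,t)\big)$, with $x\mapsto y_i^\delta(x,t)$ non-decreasing. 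I would prove \cref{eq17} for $f_\delta$ with constant $L_\delta$, and then let $\delta\to0$: since $f_\delta\to f$ locally uniformly, $f_\delta^*\to f^*$ and $V_i^\delta\to V_i$ locally uniformly, so $u_i^\delta\rightharpoonup u_i$ weakly in $L^1_{loc}(\re)$ (uniform bound $\|u_i^\delta\|_\infty\le\|u_{i0}\|_\infty$); then $L_\delta\to L$, and \cref{eq17} passes to the limit by weak lower semicontinuity of $g\mapsto\int_a^b|g|$ on the left and continuity of $s\mapsto\int_{a-st}^{b+st}|u_{10}-u_{20}|$ on the right.

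\emph{The key identity.} So fix $f$ smooth and uniformly convex, $t>0$, and suppose $c<d$ are points of continuity of both $u_1(\cdot,t)$ and $u_2(\cdot,t)$ with $u_1(c,t)=u_2(c,t)=:p_c$ and $u_1(d,t)=u_2(d,t)=:p_d$. Since the backward characteristic speed $f'(\cdot)$ depends only on the value of the solution, the feet at $c$ and at $d$ coincide for the two solutions: $y_1(c,t)=y_2(c,t)=:y_c=c-tf'(p_c)$ and $y_1(d,t)=y_2(d,t)=:y_d=d-tf'(p_d)$. Inserting these common feet into $V_i(\cdot,t)=v_0^{(i)}(y)+tf^*\!\big(\tfrac{\cdot-y}{t}\big)$ (valid for $y$ a minimizer, by \cref{eq13}) and using $\int_c^d u_i(x,t)\,dx=V_i(d,t)-V_i(c,t)$ (part (1)), the terms $tf^*(f'(p_c))$ and $tf^*(f'(p_d))$ are independent of $i$ and cancel, leaving the exact identity
\begin{equation*}
\int_c^d \big(u_1(x,t)-u_2(x,t)\big)\,dx \;=\; \int_{y_c}^{y_d}\big(u_{10}(y)-u_{20}(y)\big)\,dy.
\end{equation*}
If in addition $u_1-u_2$ keeps one sign on $(c,d)$, the left side equals $\int_c^d|u_1-u_2|\,dx$, so $\int_c^d|u_1-u_2|\,dx\le\int_{y_c}^{y_d}|u_{10}-u_{20}|\,dy$ (and $y_c\le y_d$ since $c<d$ and the feet are non-decreasing).

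\emph{Summation.} Since $u_1(\cdot,t)-u_2(\cdot,t)\in BV_{loc}$, one can partition $(a,b)$, up to a countable set, into intervals $(c_k,d_k)$ on each of which $u_1-u_2$ has a fixed sign and whose endpoints are continuity points with $u_1=u_2$. Applying the key identity on each $(c_k,d_k)$ and summing,
\[
\int_a^b|u_1(x,t)-u_2(x,t)|\,dx=\sum_k\int_{c_k}^{d_k}|u_1-u_2|\,dx\le\sum_k\int_{y_{c_k}}^{y_{d_k}}|u_{10}-u_{20}|\,dy.
\]
Monotonicity of $x\mapsto y_i(x,t)$ together with $c_k\le d_k\le c_{k+1}$ gives $y_{c_k}\le y_{d_k}\le y_{c_{k+1}}$, so the intervals $[y_{c_k},y_{d_k}]$ are pairwise non-overlapping; and since $\|u_i(\cdot,t)\|_\infty\le\|u_{i0}\|_\infty$ (part (4)) we get $|x-y_i(x,t)|=t|f'(u_i(x,t))|\le Lt$, so every $y_{c_k},y_{d_k}\in[a-Lt,b+Lt]$. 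Hence the right-hand sum is at most $\int_{a-Lt}^{b+Lt}|u_{10}-u_{20}|\,dy$, which is \cref{eq17} for smooth uniformly convex $f$; the general case follows by the limiting argument above.

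\emph{Main obstacle.} The conceptual core --- two Lax--Ole\u\i nik solutions run along the same backward characteristic through any point where their values agree, forcing the dual-flux contributions to cancel exactly --- is short once the explicit formula and parts (1)--(4) are in hand. The real work is the bookkeeping: (a) reducing cleanly to smooth uniformly convex $f$ and justifying the limit for the $L^1$-quantity (one must use weak $L^1$ compactness on the bounded interval $(a,b)$, since the $L^1$-norm is not lower semicontinuous under mere weak-$*$ $L^\infty$ convergence); and (b) handling the at most countably many sign-change points $c_k$ (or $d_k$) that happen to be shocks of $u_1$ or of $u_2$, where one replaces $c_k$ by nearby continuity points approaching from inside $(c_k,d_k)$ and passes to the limit in the key identity. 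Everything else is routine given the machinery of \cref{thm1}(1)--(4).
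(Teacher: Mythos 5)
Your route is genuinely different from the paper's: the paper proves part (5) by a Holmgren-type duality argument (solve the adjoint transport equation $\veps_t+H_\epsilon\veps_x=\psi$ along characteristics of the averaged speed $H_\epsilon$, use the {O}le\u{\i}nik one-sided bound to control $\pa_x\chi$ and hence $\pa_x\veps$, estimate $I_1,I_2,I_3$, then pass to the limit via the stability lemma), whereas you work directly with the Hopf--Lax minimizers and a sign decomposition. That strategy is classical and can be made to work, but as written it has a genuine gap at its central step.

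The gap is in the ``key identity'' and the decomposition that feeds it. Your identity requires the endpoints $c,d$ of each sign-constant interval to be continuity points at which $u_1(c,t)=u_2(c,t)$ and $u_1(d,t)=u_2(d,t)$, so that the two solutions share a common characteristic foot and the $tf^*$ terms cancel. Such endpoints need not exist: take $u_{10}\equiv 1$ and $u_{20}\equiv 0$ near $[a-Lt,b+Lt]$; then $u_1-u_2\equiv 1$ on $(a,b)$, the only admissible ``interval'' is $(a,b)$ itself, and there is no point anywhere in the relevant region where the two solutions agree. Your proposed repair --- replacing $c_k$ by continuity points approaching from inside $(c_k,d_k)$ --- cannot work either, because inside a sign-constant interval one has $u_1\neq u_2$ by definition, so the feet never coincide there. (In the constant example your identity would in fact give $\int_a^b w\,dx=(b-a)-t\bigl(f'(1)-f'(0)\bigr)<b-a$, which is false.) The correct version of the step is an \emph{inequality}, obtained by inserting one solution's minimizer as a competitor in the other solution's infimum: for $y_1\in Ch_1(c,t)$ and $y_2\in Ch_2(d,t)$ one gets $V_1(d,t)-V_2(d,t)\le (v_0^{(1)}-v_0^{(2)})(y_2)$ and $V_1(c,t)-V_2(c,t)\ge (v_0^{(1)}-v_0^{(2)})(y_1)$, hence
\begin{equation*}
\int_c^d\bigl(u_1-u_2\bigr)(x,t)\,dx\;\le\;\int_{y_1(c)}^{y_2(d)}\bigl(u_{10}-u_{20}\bigr)(y)\,dy ,
\end{equation*}
with no requirement that the solutions agree at $c$ or $d$. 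With this replacement (and the symmetric choice on negativity intervals, so that consecutive target intervals share a foot of the \emph{same} solution and non-overlap follows from monotonicity of $x\mapsto y_i(x,t)$), your summation and the $\delta\to0$ limit go through. As stated, however, the decomposition you rely on does not exist in general, so the proof is incomplete without this modification.
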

\begin{Remark}
This \cref{thm1} is different from the one mentioned in \cite{Evans}. In \cite{Evans}, the function $f$ is assumed to be uniformly convex and smooth. However, in the above \cref{thm1}, we just assume that $f$ is convex and has super-linear growth.
\end{Remark}

Note that if $u_0$ is in $L^{\infty}(\re)$, then the functions $u$ and $f(u)$ are in $L_{loc}^1(\re)$. However, in general, if $u_0 \in L^1(\re)$, apriori a weak solution $u$ of the PDE (\ref{SC1}) need not be well defined as $u$ and $f(u)$ need not be in $L^1_{loc}(\re)$. Also, the associated value function $V$ as defined in (\ref{eq13}) need not be lipshitz. So, we have the following definition: 

\begin{definition}\label{def3}
  The measurable function $u$ is said to be a weak solution to the PDE (\ref{SC1}) with it's initial value $u_0$ to be in $L^1(\re)$ if the following holds :
  \begin{itemize}
      \item The functions $u$ and $f(u)$ are in the space $L^1_{loc}(\re \times(0,\infty))$,
      \item The function $u$ satisfy the \cref{weak} in $\re\times(0,\infty)$ i.e., for all $\varphi \in C_c^{\infty}(\re \times (0,\infty)),$ there holds
      \[ \int_0^{\infty}\int_{-\infty}^{+\infty} \left(u \varphi_t + f(u) \varphi_x \right) dx dt  = 0 \]
      \item The function $u$ satisfy the equation, 
      \[  \forall [a,b]\subset \re, \quad \lim_{t\rightarrow 0^+}\int_a^b u(x,t) dx = \int_a^b u_0(x) dx. \]
  \end{itemize}
\end{definition}


\begin{Theorem}\label{thm2}
      Assume now that the function $f$ is convex and satisfy \[
      \lim_{|p|\rightarrow \infty}\frac{f(p)}{|p|} = \infty.
      \]Let $u_0 \in L^{\infty}(\re)$, $V$ be it's corresponding value function as defined in (\ref{eq13}) and set $u := \frac{\pa V}{\pa x}$. Then, from the \cref{thm1}, the function $u$ is a weak solution obtained from the Hamilton-Jacobi method. There holds the following :
      \begin{enumerate}
          \item\label{theorem2.1} (Comparison Principle.) For $u_{10}, u_{20} \in L^{\infty}(\re)$, let $u_1$, $u_2$ be the respective solutions obtained from the Hamilton-Jacobi method. Then, for a.e $x \in \re$, a.e $t \in (0,\infty)$, there holds the implication,
          \begin{equation}
              u_{10} (x) \leq u_{20}(x) \implies u_1(x,t) \leq u_2(x,t)
          \end{equation}
      \item\label{prev_point} For $u_{10}, u_{20} \in L^{1}(\re)$, let $u_{10n}$, $u_{20n}$ be the respective sequence of functions in $L^{\infty}(\re) \cap L^{1}(\re)$ such that $u_{i0n}(x)$ converges to $u_{i0}(x)$ in $L^1(\re)$, for $i=1,2$. Let $u_{1n}$, $u_{2n}$ be the corresponding solutions obtained from the Hamilton-Jacobi method for the initial data $u_{10n}$, $u_{20n}$ respectively. Then, for $i=1,2$, for $T>0$, we have
        \begin{itemize}
            \item $\{u_{1n}\} , \{u_{2n}\}$ are cauchy sequences in $L^1(\re\times(0,T))$.
            \item\label{it5022}  For $u_i$ to be the limit of $\{u_{in}\}$ in $L^1(\re\times(0,T))$ and for $0<\tau <T$, we have,
            \begin{equation}\label{eq319}
            \begin{split}
                \int_{-\infty}^{\infty}\int_{\tau}^T | u_1(x,t) &- u_2(x,t) | dx dt \\ &\leq (T-\tau)\int_{-\infty}^{\infty} \left|u_{10}(x) - u_{20}(x)\right| dx.
                \end{split}
            \end{equation}
        \item If for $i\in \{1,2\}$, the functions $\{v_{i0n}\}_{n \in \mathbb{N}}$ converges to $u_{i0}$ in $L^1(\re)$ with $v_{i0n} \in L^{\infty}(\re) \cap L^{1}(\re)$, then for a.e $(x,t)$ in $\re\times(0,\infty)$, we see that
        \begin{equation}\label{eq320}
            u_i(x,t) = v_i(x,t),
        \end{equation}
        where, $v_i := \lim_{n \rightarrow\infty}v_{in}$ in $L^1(\re \times (0,T))$, $\forall T>0$.
        \end{itemize}  
      \item Now, let $u_0 \in L^1(\re)$. As in the previous point (\ref{prev_point}), upon approximating $u_0$ by functions $\{ u_{0n} \in L^\infty \}$, one has the existence of solutions $\{ u_{n}\}$ for the scalar conservation law with the initial condition taken as $u_{0n}$. Take $u$ to be the limit of $\{u_{n}\}$ in $L^1(\re \times (0,T))$, $\forall T>0$, as in the previous point (\ref{prev_point}). Then, for any compact set $K \subset \re \times (0,\infty)$, we have
      \begin{itemize}
          \item The function $u$ is in $L^{\infty}(K)$.
          \item The function $u$ satisfy
          \[ \int_{0}^{\infty} \int_{-\infty}^{\infty}\Big[ u \varphi_t + f(u)\varphi_x \Big] (x,t) dx dt = 0, \quad \forall \varphi \in C_c^{\infty}(\re \times (0,\infty)). \]
        \item Furthermore, the function $u$ also satisfy,
          \[ \lim_{t\rightarrow 0 +} \int_a^b u(x,t) dx = \int_a^b u_0(x) dx , \quad \forall [a,b] \subset \re.\]
      \end{itemize}

    \end{enumerate}
\end{Theorem}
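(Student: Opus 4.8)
The plan is to derive all three parts of \cref{thm2} from the $L^1$-contraction estimate \cref{eq17} of \cref{thm1}, the representation $u=\pa_x V$, and the coercivity of the Fenchel dual $f^*$; no ingredient beyond \cref{thm1} enters. The workhorse is the following global version of the contraction: if $g_1,g_2\in L^\infty(\re)\cap L^1(\re)$ have Hamilton--Jacobi solutions $w_1,w_2$, then the right-hand side of \cref{eq17} is at most $\lVert g_1-g_2\rVert_{L^1(\re)}$, so sending $a\to-\infty$, $b\to+\infty$ yields
\begin{equation}\label{plan-contr}
\lVert w_1(\cdot,t)-w_2(\cdot,t)\rVert_{L^1(\re)}\le\lVert g_1-g_2\rVert_{L^1(\re)}\qquad\text{for a.e.\ }t>0 .
\end{equation}
For the comparison principle (Part 1) I would set $w:=u_{20}-u_{10}\ge0$, $M_0:=\max(\lVert u_{10}\rVert_\infty,\lVert u_{20}\rVert_\infty)$, and let $L$ be the speed in \cref{eq17} attached to $M_0$. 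Fixing a compact $K\subset\re\times(0,\infty)$, I choose $R$ so large that $[x-Lt,x+Lt]\subset(-R,R)$ for all $(x,t)\in K$, and put $\hat u_{20}:=u_{10}+w\,\mathbf 1_{[-R,R]}$, with Hamilton--Jacobi solution $\hat u_2$; then $u_{10}\le\hat u_{20}$, $\lVert\hat u_{20}\rVert_\infty\le M_0$, and $\hat u_{20}-u_{10}\in L^1(\re)$. By \eqref{plan-contr} both $\hat u_2(\cdot,t)-u_1(\cdot,t)$ and, by local Lipschitzness of $f$, $f(\hat u_2(\cdot,t))-f(u_1(\cdot,t))$ lie in $L^1(\re)$ for a.e.\ $t$; subtracting the weak formulations and testing against $\chi_n(x)\psi(t)$ with $\chi_n\uparrow1$, $|\chi_n'|\le C/n$, and letting $n\to\infty$ gives conservation of mass $\int_\re(\hat u_2-u_1)(x,t)\,dx=\int_\re(\hat u_{20}-u_{10})\,dx=\lVert\hat u_{20}-u_{10}\rVert_{L^1(\re)}$ for a.e.\ $t$ (the last equality since $\hat u_{20}\ge u_{10}$). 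Combined with \eqref{plan-contr} this forces $\hat u_2\ge u_1$ a.e.; and since $\hat u_{20}=u_{20}$ on $[-R,R]$, \cref{eq17} applied to $u_2,\hat u_2$ gives $\hat u_2=u_2$ a.e.\ on $K$, hence $u_2\ge u_1$ a.e.\ on $K$. Exhausting $\re\times(0,\infty)$ completes Part 1.

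For Part 2, applying \eqref{plan-contr} to $u_{10n},u_{10m}$ and integrating in $t$ over $(0,T)$ gives $\lVert u_{1n}-u_{1m}\rVert_{L^1(\re\times(0,T))}\le T\lVert u_{10n}-u_{10m}\rVert_{L^1(\re)}\to0$, so $\{u_{1n}\}$ and likewise $\{u_{2n}\}$ are Cauchy in $L^1(\re\times(0,T))$ for every $T$; write $u_i$ for the limits. Applying \eqref{plan-contr} to $u_{10n},u_{20n}$, integrating over $(\tau,T)$, and letting $n\to\infty$ (the left side converges since $u_{in}\to u_i$ in $L^1(\re\times(\tau,T))$) proves \cref{eq319}. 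If moreover $v_{i0n}\to u_{i0}$ in $L^1(\re)$, then \eqref{plan-contr} gives $\lVert u_{in}(\cdot,t)-v_{in}(\cdot,t)\rVert_{L^1(\re)}\le\lVert u_{i0n}-u_{i0}\rVert_{L^1(\re)}+\lVert u_{i0}-v_{i0n}\rVert_{L^1(\re)}\to0$, and integrating in $t$ yields \cref{eq320}.

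For Part 3, since $u_{0n}\to u_0$ in $L^1(\re)$ we have $C:=\sup_n\lVert u_{0n}\rVert_{L^1(\re)}<\infty$, hence the primitives obey $\lVert v_{0n}\rVert_{L^\infty(\re)}\le C$ for all $n$. Because $f$ is convex, finite and superlinear, $f^*$ is convex, finite, locally Lipschitz and superlinear, so $\{f^*\le D\}$ is bounded for each $D$. For $K\subset[-A,A]\times[t_0,T]$ compact, $(x,t)\in K$, and any minimiser $y^*\in Ch_n(x,t)$, comparing $V_n(x,t)=v_{0n}(y^*)+tf^*\!\big(\tfrac{x-y^*}{t}\big)$ with the competitor $y=x$ gives $tf^*\!\big(\tfrac{x-y^*}{t}\big)\le v_{0n}(x)-v_{0n}(y^*)+tf^*(0)\le 2C+tf^*(0)$, so $\tfrac{x-y^*}{t}\in\{f^*\le 2C/t_0+f^*(0)\}\subset[-R,R]$. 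Since $V_n(\cdot,t)$ lies below the competitor function $x'\mapsto v_{0n}(y^*)+tf^*\!\big(\tfrac{x'-y^*}{t}\big)$ with equality at $x$, it follows that $u_n(x,t)=\pa_x V_n(x,t)\in\pa f^*\!\big(\tfrac{x-y^*}{t}\big)$ wherever this derivative exists, hence $|u_n(x,t)|\le M:=\sup\{|\xi|:\xi\in\pa f^*(q),\ |q|\le R\}<\infty$, uniformly in $n$. Thus $\lVert u_n\rVert_{L^\infty(K)}\le M$, and $u_n\to u$ in $L^1(K)$ gives $u\in L^\infty(K)$. For $\varphi\in C_c^\infty(\re\times(0,\infty))$ with $\supp\varphi\subset K$ the boundary term in \cref{weak} is absent, so $\int_0^\infty\!\int_\re(u_n\varphi_t+f(u_n)\varphi_x)\,dx\,dt=0$; on $K$ one has $u_n\to u$ in $L^1$, $|u_n|\le M$ and $f$ continuous, so dominated convergence along an a.e.-convergent subsequence passes the limit to $\int_0^\infty\!\int_\re(u\varphi_t+f(u)\varphi_x)\,dx\,dt=0$. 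For the initial trace, fix $[a,b]\subset\re$: \eqref{plan-contr} with $m\to\infty$ gives $\lVert u(\cdot,t)-u_n(\cdot,t)\rVert_{L^1(\re)}\le\lVert u_0-u_{0n}\rVert_{L^1(\re)}$ for a.e.\ $t$, whence $\big|\int_a^b u(\cdot,t)-\int_a^b u_0\big|\le 2\lVert u_0-u_{0n}\rVert_{L^1(\re)}+\big|\int_a^b u_n(\cdot,t)-\int_a^b u_{0n}\big|$; for fixed $n$ the last term $\to0$ as $t\to0^+$ (the $L^\infty$ solution $u_n$ attains $u_{0n}$ in $L^1_{loc}$, by \cref{thm1}), so taking $\limsup_{t\to0^+}$ and then $n\to\infty$ finishes.

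I expect the two genuinely substantive points to be the uniform $L^\infty(K)$ bound of Part 3 and the comparison principle. For the former, one cannot pass $\lVert u_{0n}\rVert_\infty$ (which generically blows up) to the limit; the bound survives only because the \emph{primitives} $v_{0n}$ stay uniformly bounded, while the superlinearity of $f$ makes $f^*$ coercive, trapping the characteristic points $\tfrac{x-y^*}{t}$ inside a fixed compact set. For the latter, the usual ``characteristics do not cross'' argument is awkward when $f$ is not strictly convex (so $\pa f^*$ is set-valued); instead one couples the global contraction \eqref{plan-contr} with conservation of mass and finite speed of propagation, all of which \cref{thm1} already provides. The remaining steps are standard passages to the limit.
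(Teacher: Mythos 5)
Your proposal is correct, and two of its three main steps go by genuinely different routes than the paper. For the comparison principle you truncate the nonnegative difference of the data so that it becomes integrable and then run a Crandall--Tartar-type mechanism: the global $L^1$-contraction derived from \cref{eq17} together with conservation of mass forces $\int|\hat u_2-u_1|=\int(\hat u_2-u_1)$, hence $\hat u_2\ge u_1$ a.e., and finite speed of propagation (again from \cref{eq17}) identifies $\hat u_2$ with $u_2$ on any compact set. The paper instead reads the sign off the adjoint construction of \cref{lem5}: with $\omega_0\le0$ and $\psi\ge0$ the function $\veps$ of \cref{eq72} satisfies $\veps(\cdot,0)\le0$, so the boundary term in \cref{eq79} is nonpositive and the error terms vanish with $\epsilon$. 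Your route costs you the mass-conservation step (which you correctly reduce to testing the difference of the weak formulations against $\chi_n(x)\psi(t)$, using that $f$ is locally Lipschitz on the common range and that the difference of the solutions is in $L^1(\re)$ for a.e.\ $t$), but it avoids re-entering the $\epsilon$-regularisation. For the uniform $L^\infty(K)$ bound of Part 3 you trap the characteristic slope $(x-y^*)/t$ in a fixed compact set using only $\lVert v_{0n}\rVert_\infty\le\sup_n\lVert u_{0n}\rVert_{L^1}$ and the coercivity of $f^*$, and then read off $u_n(x,t)\in\pa f^*\big((x-y^*)/t\big)$ from the touching of $V_n(\cdot,t)$ by the convex competitor; this works directly with the unmollified flux, whereas the paper bounds $|f'_\epsilon(u_{\epsilon,n})|=|(x-y_{+,\epsilon,n})/t|$, invokes $\lim_{|q|\to\infty}\inf_{\epsilon}|f'_\epsilon(q)|=\infty$, and needs \cref{prop_new} to push the bound through the weak limit. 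Part 2 and the remaining limit passages coincide with the paper. Two small points to tighten: your global contraction is applied to $u_{10}$ and $\hat u_{20}$, which are not themselves in $L^1(\re)$ --- only their difference is --- so state it under the hypotheses $g_1,g_2\in L^\infty(\re)$, $g_1-g_2\in L^1(\re)$, which is all that the derivation from \cref{eq17} uses; and the assertion that $u_n$ attains $u_{0n}$ in $L^1_{loc}$ is stronger than what \cref{thm1} records --- what you actually need is only $\int_a^b u_n(x,t)\,dx\to\int_a^b u_{0n}(x)\,dx$, which follows from the Lipschitz continuity of $V_n$ up to $t=0$ (or from the paper's explicit test-function argument).
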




\newpage
\begin{Theorem}\label{thm3}
      Let $f:\re \mapsto \re$ be a convex function such that 
      \[\lim_{p \rightarrow \pm \infty}\frac{f(p)}{p} = \mu_{\pm},\]
      with $\mu_- \leq 0 \leq \mu_+$ and $u_0 \in L^1(\re)$. Then, there exist a weak solution $u \in L_{loc}^1(\re\times(0,\infty))$ to the PDE (\ref{SC1}) as mentioned in \cref{def3} with the initial data $u_0$. 
        Moreover, suppose that $u_{10}$ and $u_{20}$ are in $L^1(\re)$ and if the corresponding weak solutions are $u_1$ and $u_2$ in the space $L^1_{loc}(\re\times(0,\infty))$, then for a.e $t>0$, they satisfy 
      \begin{equation}\label{call3.1} \int_{-\infty}^{\infty}|u_1(x,t) - u_2(x,t)| dx \leq \int_{-\infty}^{\infty} |u_{10}(x) - u_{20}(x)| dx. 
      \end{equation}
     Furthermore, for any compact set $K \subset \re \times (0,\infty)$, we have
      \begin{itemize}
          \item The function $u$ is in $L^{\infty}(K)$.
          \item The function $u$ satisfy
          \[ \int_{0}^{\infty} \int_{-\infty}^{\infty}\Big[ u \varphi_t + f(u)\varphi_x \Big] (x,t) dx dt = 0, \quad \forall \varphi \in C_c^{\infty}(\re \times (0,\infty)). \]
        \item Finally, the function $u$ also satisfy,
          \[ \lim_{t\rightarrow 0 +} \int_a^b u(x,t) = \int_a^b u_0(x) dx , \quad \forall [a,b] \subset \re.\]
      \end{itemize}

\end{Theorem}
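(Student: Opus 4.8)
The plan is to reduce \cref{thm3} to \cref{thm1} by a truncation of the flux, build the solution for $L^1(\re)$ data by approximation, and propagate the $L^1$–contraction to the limit. \emph{Step 1 (truncating the flux).} Since $f$ is convex with $f'(p)\to\mu_{\pm}$ at $\pm\infty$, for every $M>0$ we can pick (by a modification as in \cref{remark1}; see the Appendix) a convex \emph{super-linear} $f_M$ with $f_M=f$ on $[-M-1,M+1]$. For $w_0\in L^\infty(\re)$ with $\lVert w_0\rVert_\infty\le M$, \cref{thm1} applied to $f_M$ yields a weak solution $w$ of the conservation law with flux $f_M$ and $\lVert w\rVert_\infty\le\lVert w_0\rVert_\infty\le M$. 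Since $w$ takes values in $[-M,M]$ where $f_M=f$, \cref{remark1} makes $w$ a weak solution for the flux $f$ as well, and the uniqueness contained in \cref{eq17} shows $w$ is independent of $M>\lVert w_0\rVert_\infty$; set $\mathcal S(w_0):=w$. Applying \cref{eq17} for the flux $f_M$, with $M$ exceeding both sup-norms so that the speed constant is computed from $f$ itself, gives, for $w_0,\widetilde w_0\in L^\infty(\re)$, a finite $L$ with
\[
\int_a^b\bigl|\mathcal S(w_0)-\mathcal S(\widetilde w_0)\bigr|(x,t)\,dx\le\int_{a-Lt}^{b+Lt}\bigl|w_0-\widetilde w_0\bigr|(x)\,dx ;
\]
if moreover $w_0,\widetilde w_0\in L^1(\re)$, letting $a\to-\infty,\ b\to+\infty$ gives $\lVert\mathcal S(w_0)(\cdot,t)-\mathcal S(\widetilde w_0)(\cdot,t)\rVert_{L^1(\re)}\le\lVert w_0-\widetilde w_0\rVert_{L^1(\re)}$ for a.e.\ $t>0$.

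\emph{Step 2 ($L^1$ data and $L^1$–contraction).} Given $u_0\in L^1(\re)$, put $u_{0n}:=(-n)\vee(u_0\wedge n)\in L^\infty(\re)\cap L^1(\re)$; then $u_{0n}\to u_0$ in $L^1(\re)$, $\lVert u_{0n}\rVert_{L^1}\le\lVert u_0\rVert_{L^1}$, and $u_n:=\mathcal S(u_{0n})$ satisfies $\lVert u_n(\cdot,t)-u_m(\cdot,t)\rVert_{L^1}\le\lVert u_{0n}-u_{0m}\rVert_{L^1}\to0$ by Step 1, so $\{u_n\}$ is Cauchy in $L^1(\re\times(0,T))$ for each $T>0$. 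Let $u:=\lim_n u_n$; the same bound shows $u$ does not depend on the approximating sequence, and, applied to the truncations of two data $u_{10},u_{20}\in L^1(\re)$ together with an a.e.-$t$ extraction from $L^1(\re\times(0,T))$–convergence, it gives \cref{call3.1}.

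\emph{Step 3 (weak formulation and initial trace).} Each $u_n$ is a weak solution for the flux $f$, so $\int_0^\infty\!\int_{\re}\bigl(u_n\varphi_t+f(u_n)\varphi_x\bigr)\,dx\,dt=0$ for $\varphi\in C_c^\infty(\re\times(0,\infty))$. Extracting $u_n\to u$ a.e.\ and using the local bound of Step 4 on a neighbourhood of $\supp\varphi$ to obtain local equi-integrability of $\{u_n\}$ and $\{f(u_n)\}$ (when $\mu_\pm$ are finite, $f$ is globally Lipschitz and this is immediate), one passes to the limit: $u,f(u)\in L^1_{loc}(\re\times(0,\infty))$ and the weak identity holds for $u$. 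For the trace, the estimate of Step 1 with $\widetilde w_0$ a fixed mollification of $u_0$ makes $t\mapsto\int_a^b u_n(x,t)\,dx\to\int_a^b u_0$ as $t\to0^+$ uniformly in $n$, hence $\lim_{t\to0^+}\int_a^b u(x,t)\,dx=\int_a^b u_0(x)\,dx$ for every $[a,b]\subset\re$; thus $u$ is a weak solution in the sense of \cref{def3}.

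\emph{Step 4 (local boundedness — the main obstacle).} It remains to show $u\in L^\infty(K)$ for compact $K\subset\re\times(0,\infty)$. The plan is to produce, uniformly in $n$, a one-sided (Ole\u{\i}nik–Hoff, cf.\ \cref{olp1}) bound for $u_n$ for the flux $f$ — which transfers from the super-linear flux $f_M$ through Step 1 with the universal constant $1/t$ — and to combine it with $\lVert u_n(\cdot,t)\rVert_{L^1(\re)}\le\lVert u_0\rVert_{L^1(\re)}$ from Step 2: a large value $u_n(x_0,t)$ forces $f'(u_n(\cdot,t))$, hence $u_n(\cdot,t)$, to remain large on an interval to the right of $x_0$ of controlled length, incompatible with the $L^1$–bound unless $|u_n(x_0,t)|\le C(\lVert u_0\rVert_{L^1},t)$; this is the regularising mechanism underlying \cref{thm2}. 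I expect this to be the genuinely hard step: in contrast to the super-linear case of \cref{thm2}, when $\mu_+$ or $\mu_-$ is finite the interval produced by the one-sided bound degenerates as $u_n$ grows, so the estimate must be carried out using the precise behaviour of $f^{*}$ near $\mu_{\pm}$, and it is here that the hypothesis $\mu_-\le0\le\mu_+$ enters in an essential way.
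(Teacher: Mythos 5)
Your Steps 1--3 follow the same broad strategy as the paper (truncate the flux to reduce to \cref{thm1}, truncate the data and pass to the limit by $L^1$--contraction), but the proof stands or falls on Step 4, which you leave as a sketch, and the mechanism you propose there does not work. The Ole\u{\i}nik--Hoff inequality \cref{olp1} controls $f'(u_n)$, not $u_n$. On the side where $\mu$ is finite --- say $\mu_->-\infty$ --- the derivative $f'$ tends to the finite limit $\mu_-$ as $p\to-\infty$, so a very negative value $u_n(x_0,t)$ corresponds to $f'(u_n(x_0,t))$ being merely close to $\mu_-$, and the one-sided bound $f'(u_n(x+z,t))-f'(u_n(x,t))\le z/t$ forces nothing about $u_n$ on a neighbouring interval: $(f^*)'$ blows up at $\mu_-$, so arbitrarily large excursions of $u_n$ towards $-\infty$ are compatible with $f'(u_n)$ varying by an arbitrarily small amount. (The extreme case $f\equiv 0$, which satisfies the hypotheses with $\mu_\pm=0$ and has $u(\cdot,t)=u_0$, shows that no two-sided local $L^\infty$ bound can come out of such an argument for general $L^1$ data.) So the ``regularising mechanism underlying \cref{thm2}'' that you invoke is exactly what is lost when superlinearity is dropped, and your Step 4 cannot be completed as stated.

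The paper avoids this by a different decomposition. In the case $\mu_+=\infty$, $\mu_->-\infty$ it modifies the flux only on the non-superlinear side (replacing $f$ below a point $p_n\in(-n-1,-n)$ by a superlinear extension as in \cref{equation117}, keeping $f$ untouched on $[p_n,\infty)$) and truncates the data only from below ($u_{n0}=u_0$ where $u_0\ge -n$, and $0$ elsewhere). The comparison principle (part (\ref{theorem2.1}) of \cref{thm2}) then gives $u_n\ge -n$, so the flux modification is never seen by $u_n$, and the monotonicity $u_{n+1}\le u_n$ gives $u\le u_1$ in the limit. The upper bound $u\le C(K)$ on the set where $u\ge 0$ comes from the characteristic-set bound in the proof of \cref{thm2} applied to $u_1$, using only the superlinearity of $f$ at $+\infty$; and on the set where $u<0$ no $L^\infty$ bound is attempted --- instead the finiteness of $\mu_-$ gives $f(p)\le\alpha+\beta|p|$ for $p\le 0$, hence $|f(u)|\le\alpha+\tau|u|\in L^1_{loc}$, and the weak formulation passes to the limit by dominated convergence. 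That is where the hypothesis $\mu_-\le 0\le\mu_+$ actually enters: on whichever side the flux fails to be superlinear it grows at most linearly, so $f(u)$ is controlled by the $L^1$ norm of $u$ rather than by an $L^\infty$ bound. To repair your write-up, replace the symmetric truncation $(-n)\vee(u_0\wedge n)$ by the one-sided one so that the comparison principle and monotonicity are available, and replace your Step 4 by this linear-growth argument on the finite-$\mu$ side.
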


\section*{\bf Prerequisites for proving the Main Theorems.}
The idea of the proof(s) rely on the approach by \cite{Oleinik} and a stability result, along with some related lemmas which are stated below.

We start proving the main theorems by first assuming the following:
\begin{enumerate}
    \item\label{it21} $f : \re \mapsto \re$ is convex.
    \item $f$ has super-linear growth.
    \item\label{it23} For $\epsilon >0$, let $\eta_{\epsilon}$ be the mollifying sequence and define 
    \begin{equation}\label{molli1}
f_{\epsilon}(x) := f\ast \eta_{\epsilon}(x) = \int_{\re} f(y) \eta_{\epsilon}(x-y) dy.
    \end{equation}
    Then, we see the following :
    \begin{itemize}
    \item For every $\epsilon >0$, the functions $\{f_{\epsilon}\}$ are in $C^{\infty}(\re)$.
    \item\label{ab3} For $\epsilon >0$, $f_{\epsilon}$ is convex.
    \item The functions $\{f_{\epsilon}\}$ converges uniformly to $f$ on compact subsets of $\re$ as $\epsilon \rightarrow 0$.
    \item\label{it22} For every $0< \epsilon <1$, the functions $f_{\epsilon}$ has super-linear growth and the super-linear growth is uniform.
    \end{itemize}
\end{enumerate}

\begin{Lemma}\label{lem1}
Let $f$ be a real valued function which is convex and has super-linear growth. Also, define the mollified function $f_{\epsilon}$ as in \cref{molli1}. Then, there holds the following,
\begin{enumerate}
    \item $f^* : \re \mapsto \re$ is convex and has superlinear growth.
    \item As $\epsilon \rightarrow 0$, we see that the Fenchel dual of the mollified function $f^*_{\epsilon}$ goes to $f^*$ uniformly on compact sets.
    \item As $\epsilon \rightarrow 0$, we see that 
    \begin{equation*} \lim_{|q| \rightarrow \infty} \frac{f^*(q)}{|q|} = \infty, \quad \lim_{|q| \rightarrow \infty} \inf_{0 \leq \epsilon \leq 1}\frac{f^*_{\epsilon}(q)}{|q|} = \infty.
    \end{equation*}
    \item Let $\alpha \in C_c^{\infty}(B(0,1))$ be a non-negative function such that $$\int_{\re} \alpha(s) \  ds = 1.$$ For $\epsilon>0$, let $\left\{ \alpha_{\epsilon}(x) := \frac{1}{\epsilon}\alpha\left( \frac{x}{\epsilon}\right)\right\}$ be the mollifying sequence of the function $\alpha$. For every $F:\re \mapsto \re$, a convex function with super-linear growth, the function 
    \begin{equation}\label{mol}
        F_{\epsilon}(x) := \left(\alpha_{\epsilon}*F\right)(x) + \epsilon x^2,
    \end{equation}
    satisfy the properties (\ref{it21})-(\ref{it22}) mentioned in the assumptions. Moreover, the functions $ F_{\epsilon}$ is uniformly convex with respect to $\epsilon$.
 \end{enumerate}
\end{Lemma}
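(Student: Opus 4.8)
The plan is to establish the four assertions in order, using only elementary convex-duality facts together with the uniform superlinear growth of the mollified family $\{f_\epsilon\}$. For (1), I would note that for each fixed $p$ the map $q\mapsto pq-f(p)$ is affine, so $f^*$ from \cref{eq11} is a supremum of affine functions, hence convex and lower semicontinuous; its finiteness on all of $\re$ is precisely what the superlinear growth of $f$ provides, since then $pq-f(p)\to-\infty$ as $|p|\to\infty$. For the superlinear growth of $f^*$, fix $M>0$ and test the supremum with $p=M\sign(q)$, which gives $f^*(q)\ge M|q|-\max\{f(M),f(-M)\}$; hence $\liminf_{|q|\to\infty}f^*(q)/|q|\ge M$, and letting $M\to\infty$ yields the claim. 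Running the identical computation with $f$ replaced by $f_\epsilon$ gives $f_\epsilon^*(q)\ge M|q|-\max\{f_\epsilon(M),f_\epsilon(-M)\}$, and since $f_\epsilon\to f$ uniformly on the compact set $\{-M,M\}$ the quantity $\sup_{0\le\epsilon\le1}\max\{f_\epsilon(M),f_\epsilon(-M)\}=:C(M)$ is finite, so $\inf_{0\le\epsilon\le1}f_\epsilon^*(q)/|q|\ge M-C(M)/|q|$; this proves the second limit in (3), while the first limit in (3) is just (1) restated. Thus (1) and (3) are handled together.

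The heart of the lemma is (2). The key point is that although $f^*(q)$ and $f_\epsilon^*(q)$ are defined as suprema over all of $\re$, the uniform superlinear growth forces these suprema to be attained on a fixed compact set of $p$'s, locally uniformly in $q$ and uniformly in $\epsilon\in(0,1)$. Concretely, fix $R>0$; for $|q|\le R$ one has $pq-f(p)\le|p|\big(R-f(p)/|p|\big)\to-\infty$ as $|p|\to\infty$, and uniformly so when $f$ is replaced by $f_\epsilon$ because the superlinear growth of the $f_\epsilon$ is uniform; comparing with the value at $p=0$, there is $P=P(R)$ such that $f^*(q)=\sup_{|p|\le P}(pq-f(p))$ and $f_\epsilon^*(q)=\sup_{|p|\le P}(pq-f_\epsilon(p))$ for all $|q|\le R$ and all $\epsilon\in(0,1)$. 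Then $|f_\epsilon^*(q)-f^*(q)|\le\sup_{|p|\le P}|f_\epsilon(p)-f(p)|\to0$ as $\epsilon\to0$, uniformly over $|q|\le R$. This localization is the only place where the \emph{uniformity} of the superlinear growth is genuinely used; everything else is bookkeeping.

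For (4), convexity and smoothness of $F_\epsilon=\alpha_\epsilon*F+\epsilon x^2$ are immediate: $\alpha_\epsilon*F$ inherits convexity from $F$ and is $C^\infty$ because $\alpha_\epsilon$ is, while $\epsilon x^2$ is smooth and convex. Writing $F''$ for the nonnegative Radon measure that is the distributional second derivative of the convex function $F$, one has $(\alpha_\epsilon*F)''=\alpha_\epsilon*F''\ge0$, so $F_\epsilon''\ge2\epsilon>0$ and $F_\epsilon$ is uniformly convex for each $\epsilon>0$. Uniform convergence $F_\epsilon\to F$ on compact sets follows from continuity of $F$ (so $\alpha_\epsilon*F\to F$ locally uniformly) together with $\epsilon x^2\to0$ locally uniformly. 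For the uniform superlinear growth, I would use that a convex $F$ with superlinear growth satisfies, for each $M>0$, a global affine lower bound $F(z)\ge M|z|-C_M$; since $\supp\alpha_\epsilon\subset B(0,\epsilon)\subset B(0,1)$ this gives $\alpha_\epsilon*F(x)\ge\int\big(M|x-y|-C_M\big)\alpha_\epsilon(y)\,dy\ge M(|x|-1)-C_M$, whence $F_\epsilon(x)/|x|\ge M-(M+C_M)/|x|$ uniformly in $\epsilon\in(0,1)$, and letting $M\to\infty$ yields the uniform superlinear growth, which subsumes the remaining requirements in (\ref{it21})--(\ref{it22}).

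I expect the localization step in (2) to be the only real obstacle: once one knows that the suprema defining $f^*$ and $f_\epsilon^*$ may be restricted to a common compact $p$-set on each bounded set of $q$'s, the uniform convergence of the duals is routine, and every other item follows from standard properties of mollification and convexity.
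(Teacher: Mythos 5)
Your proposal is correct and follows essentially the same route as the paper: superlinearity of $f^*$ and of $\inf_\epsilon f_\epsilon^*$ by testing the Legendre supremum with affine minorants, uniform convergence of the duals by localizing the suprema defining $f^*$ and $f_\epsilon^*$ to a common compact set of $p$'s (uniformly in $\epsilon$, via the uniform superlinear growth) and then bounding $|\sup g-\sup h|\leq\sup|g-h|$, and the standard mollification facts for part (4). The only minor variation is in part (4), where you obtain the uniform superlinear growth of $\alpha_{\epsilon}*F$ from global bounds $F(z)\geq M|z|-C_M$ rather than from the paper's monotonicity estimate $F(x-\epsilon y)\geq F(x\mp1)$; both are routine and equivalent in effect.
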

For the proof of the lemma (\ref{lem1}), refer (\ref{proflem1}$^{\text{nd}}$) part of the Appendix mentioned at the end of this note.
\begin{Lemma}\label{lem2}
Assume the hypothesis of the \cref{thm1}. Then, there holds the following. (see \cite{Conh}, \cite{Conh2}, \cite{Lax}).

\begin{enumerate}
    \item\label{lem2.1} The function $x \mapsto V(x,t)$ is a lipshitz function for all $t >0$ and we have 
    \begin{equation*}
        |V(x,t) - V(y,t)| \leq lip(v_0) |x-y|.
    \end{equation*}
    \item There exist $M>0$ such that \begin{equation*}
        V(x,t) = \inf \left\{ v_0(y) + tf^*\left(\frac{x-y}{t}\right) ;  \left| \frac{x-y}{t}\right| \leq M \right\}.
    \end{equation*}
    \item The set $Ch(x,s,t)$ is bounded and non-empty.
    \item We have the equality $V(x,s,t) = V(x,t)$, for all $0\leq s <t$. We also see that the function $(x,t) \mapsto V(x,t)$ is a lipshitz function with $V(x,0) = v_0(x)$.
    \item\label{it3005} Set $ u := \frac{\pa}{\pa x} V$. Then, $u$ is a weak solution of the PDE (\ref{SC1}) with $\lVert u\rVert_{\infty} \leq \lVert u_0\rVert_{\infty}$.
\end{enumerate}
\end{Lemma}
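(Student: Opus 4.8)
I would prove the five assertions in the stated order, using only convexity of $f$, super-linearity of $f$ (hence, by \cref{lem1}, finiteness and super-linearity of $f^*$), and the Lipschitz bound $\mathrm{lip}(v_0)\le\lVert u_0\rVert_{\infty}$; crucially, I would never differentiate $f$ or $f^*$. For (1), fix $t>0$ and, given a near-minimizer $z$ for $V(y,t)$, use $z+(x-y)$ as a competitor for $V(x,t)$ together with $|v_0(z+x-y)-v_0(z)|\le\mathrm{lip}(v_0)|x-y|$ to get $V(x,t)\le V(y,t)+\mathrm{lip}(v_0)|x-y|$; symmetry finishes it, and the same argument runs on every time slice (with $V(\cdot,0):=v_0$). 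For (2) and (3), the key is that comparing the competitor $y$ against the competitor $x$ (i.e. $q:=\tfrac{x-y}{t}=0$) forces any near-optimal $y$ to satisfy $f^*(q)-\mathrm{lip}(v_0)|q|\le f^*(0)$; super-linearity of $f^*$ then produces $M$, depending only on $\mathrm{lip}(v_0)$ and $f^*$ (not on $x,t$), beyond which this fails, so the infimum reduces to $\{|x-y|\le Mt\}$, proving (2); the same $M$ works for $V(x,s,t)$ since $V(\cdot,s)$ has the same Lipschitz constant. Then continuity of $V(\cdot,s)$ and of the finite convex function $f^*$, plus Weierstrass on the compact set $\{|x-y|\le M(t-s)\}$, give a minimizer, so $Ch(x,s,t)$ is non-empty and (being contained in that set) bounded, which is (3).

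For (4), note $V(x,0,t)=V(x,t)$ is immediate, and fix $0<s<t$. For ``$\le$'': take $z\in Ch(x,t)$, $q_0=\tfrac{x-z}{t}$, $w=x-sq_0$; since $\tfrac{w-z}{s}=\tfrac{x-w}{t-s}=q_0$, one has $V(w,s)\le v_0(z)+sf^*(q_0)$, hence $V(x,s,t)\le V(w,s)+(t-s)f^*(q_0)\le v_0(z)+tf^*(q_0)=V(x,t)$. For ``$\ge$'': take $y^*\in Ch(x,s,t)$ and $z\in Ch(y^*,s)$; convexity of $f^*$ gives $sf^*(\tfrac{y^*-z}{s})+(t-s)f^*(\tfrac{x-y^*}{t-s})\ge tf^*(\tfrac{x-z}{t})$, so $V(x,s,t)=v_0(z)+sf^*(\tfrac{y^*-z}{s})+(t-s)f^*(\tfrac{x-y^*}{t-s})\ge v_0(z)+tf^*(\tfrac{x-z}{t})\ge V(x,t)$. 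For joint Lipschitz continuity: Lipschitz in $x$ is (1); in $t$, the dynamic programming identity with competitor $y=x$ gives $V(x,t)\le V(x,s)+(t-s)f^*(0)$, while with the minimizer $y$ (where $|x-y|\le M(t-s)$ by (2)) it gives $V(x,t)\ge V(x,s)-\big[\mathrm{lip}(v_0)M-\min_{|q|\le M}f^*(q)\big](t-s)$; both bounds are uniform in $x,s,t$, and taking $s=0$ yields $V(x,0)=v_0(x)$.

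For (5): since $V(\cdot,t)$ is Lipschitz with constant $\le\lVert u_0\rVert_{\infty}$, Rademacher gives $\lVert u\rVert_{\infty}=\lVert V_x\rVert_{\infty}\le\lVert u_0\rVert_{\infty}$, and then $f(u)$ is bounded as $u$ ranges in a fixed compact interval. The crux is $V_t+f(V_x)=0$ at a.e. point $(x_0,t_0)\in\re\times(0,\infty)$, i.e. at every differentiability point. For ``$\le$'': for every slope $q$ and small $h>0$, the dynamic programming identity gives $V(x_0,t_0)\le V(x_0-hq,t_0-h)+hf^*(q)$; a first-order expansion and $h\to0^+$ yield $V_t\le f^*(q)-qV_x$, whence $V_t\le-\sup_q\{qV_x-f^*(q)\}=-f^{**}(V_x)=-f(V_x)$ by Fenchel--Moreau. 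For ``$\ge$'': choosing $z\in Ch(x_0,t_0)$ and $q_0=\tfrac{x_0-z}{t_0}$, one checks $V(x_0-hq_0,t_0-h)=v_0(z)+(t_0-h)f^*(q_0)$ for $0<h<t_0$ (``$\le$'' by definition, ``$\ge$'' because a strict inequality would contradict optimality of $z$ via the dynamic programming identity), so $V(x_0,t_0)-V(x_0-hq_0,t_0-h)=hf^*(q_0)$, and expansion gives $q_0V_x+V_t=f^*(q_0)$, hence $V_t=f^*(q_0)-q_0V_x\ge\inf_q\{f^*(q)-qV_x\}=-f(V_x)$. Thus $V_t=-f(u)$ a.e.; since $V\in W^{1,\infty}_{loc}(\re\times(0,\infty))$, the mixed distributional derivatives of $V$ commute, so $\partial_t u=\partial_t\partial_x V=\partial_x\partial_t V=-\partial_x f(u)$ in $\mathcal D'(\re\times(0,\infty))$, which is the weak form of the conservation law on the open set. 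Finally, $V(\cdot,t)\to v_0$ locally uniformly as $t\to0^+$ (from the Lipschitz-in-$t$ bound), so $\int_a^b u(x,t)\,dx=V(b,t)-V(a,t)\to\int_a^b u_0$; integrating by parts over $(\delta,\infty)$, using the PDE, and letting $\delta\to0^+$ then upgrades this to the full weak formulation \cref{weak} with the $\varphi(\cdot,0)$ term.

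The main obstacle is this last step under the sole assumption that $f$ is convex: because $f$ (and hence $f^*$) need not be differentiable or strictly convex, the usual identification of the characteristic slope with $(f^*)'$ and the classical identity $V_t=f^*(q)-q(f^*)'(q)=-f(V_x)$ are unavailable; the remedy is to argue at an honest minimizer $z\in Ch(x_0,t_0)$ — whose existence is exactly assertion (3) — and to replace every differentiation of $f$ or $f^*$ by the biconjugation identity $f=f^{**}$. A secondary, routine point is the limit interchange used to pass from the distributional identity on $\re\times(0,\infty)$ plus the $L^1_{loc}$ initial trace to the weak formulation with test functions not vanishing at $t=0$; this is handled using the uniform bound $\lVert u\rVert_{\infty}\le\lVert u_0\rVert_{\infty}$ and the local uniform convergence $V(\cdot,t)\to v_0$.
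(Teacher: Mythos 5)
Your proposal is correct and, for parts (1)--(4), follows essentially the same route as the paper: reduce the infimum to the compact set $|x-y|\le Mt$ via super-linearity of $f^*$, transfer the Lipschitz bound from $v_0$ by translating a (near-)minimizer, get existence of minimizers from Weierstrass, and prove the dynamic programming identity by the standard two-sided argument using convexity of $f^*$ on the interpolated slope. The one genuine divergence is in part (5). The paper asserts that $V$ is a viscosity solution of $V_t+f(V_x)=0$ and invokes the ``touching by a $C^1$ function'' lemma from Evans to conclude that the equation holds pointwise a.e., before multiplying by $\varphi_x$ and integrating by parts; it does not actually verify the viscosity-solution property for a merely convex, non-differentiable $f$. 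You instead prove $V_t+f(V_x)=0$ at every point of total differentiability directly from the Hopf--Lax structure: the ``$\le$'' inequality from the dynamic programming principle with an arbitrary slope $q$ plus the biconjugation identity $f=f^{**}$, and the ``$\ge$'' inequality by propagating an honest minimizer $z\in Ch(x_0,t_0)$ (whose existence is your part (3)) along the straight characteristic, so that no derivative of $f$ or $f^*$ is ever taken. This is more self-contained and actually closes the step the paper delegates to citation; the price is a slightly longer argument and a separate treatment of the initial trace (via $\int_a^b u(x,t)\,dx=V(b,t)-V(a,t)\to v_0(b)-v_0(a)$ and a $\delta\to 0^+$ limit in the test-function identity), whereas the paper reaches the weak formulation with the $\varphi(\cdot,0)$ term in one integration by parts. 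Both endpoints agree, and your bound $\lVert u\rVert_\infty\le \mathrm{lip}(v_0)\le\lVert u_0\rVert_\infty$ is obtained identically via Rademacher.
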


\begin{proof}
Plug $y=x$ in (\ref{eq13}) to get 
\begin{equation}\label{eq18}
    V(x,t) \leq v_0(x) + t f^*(0).
\end{equation}
From the property of super-linear growth of $f^*$, we can choose $q_0 >0$ such that for all $q \geq q_0 \geq 1$, we have $f^*(q) \geq \big[ lip(v_0) + 2 |f^*(0)|\big] |q|$ and so, for all $\left| \frac{x-y}{t}\right| \geq q_0$, we see that

\begin{align}
\begin{split}\label{eq19}
v_0 (y) + t f^*\left(\frac{x-y}{t}\right) & \geq v_0(x) + (v_0(y) - v_0(x)) + \big[lip(v_0) + 2|f^*(0)|\big] |x-y| \\
& \geq v_0(x) - lip(v_0) |x-y| + \big[lip(v_0) + 2|f^*(0)|\big] |x-y| \\
& = v_0(x) + 2 t |f^*(0)| \left| \frac{x-y}{t} \right| \\
& \geq v_0(x) + 2 t |f^*(0)|.
\end{split}
\end{align}

The inequalities (\ref{eq18}) , (\ref{eq19}) along with $\left| \frac{x-y}{t}\right| \geq q_0 \geq 1$, for $M = q_0$, we have 
\begin{equation}\label{eq20}
    V(x,t) = \inf \left\{ v_0(y) + tf^*\left(\frac{x-y}{t}\right) ;  \left| \frac{x-y}{t}\right| \leq M \right\}.
\end{equation}
The function $f^*$ is convex and so is continuous, which gives $Ch(x,t)$ to be nonempty and that infimum becomes minimum in (\ref{eq20}), i.e 
\begin{equation}\label{eq21}
    V(x,t) = \min \left\{ v_0(y) + tf^*\left(\frac{x-y}{t}\right) ;  \left| \frac{x-y}{t}\right| \leq M \right\}.
\end{equation}
So, for $x,z \in \re$, $y \in Ch(z,t)$, for all $\eta > 0$, we have 
\begin{equation}
    V(x,t) - V(z,t) \leq v_0(\eta) + t f^*\left(\frac{x-\eta}{t}\right) - v_0(y) - t f^*\left(\frac{z-y}{t}\right).
\end{equation}
Set $\eta = x - z + y$ to get
\begin{equation}\label{eq23}
     V(x,t) - V(z,t) \leq lip(v_0) |x-z|.
\end{equation}
Interchange $x \leftrightarrow z$ to obtain
\begin{equation}\label{eq24}
     |V(x,t) - V(z,t)| \leq lip(v_0) |x-z|,
\end{equation}
which proves the first two parts of the lemma and thus we have the function $V(x,s,t)$ to be lipshitz in $x-$variable with the estimates,
\begin{equation}\label{eq25}
    |V(x,s,t) - V(z,s,t)| \leq lip\big(V(.,s)\big) |x-z| \leq lip(v_0) |x-z|,
\end{equation}
\begin{equation}\label{eq26}
     V(x,s,t) = \inf \left\{ V(y,s) + (t-s)f^*\left(\frac{x-y}{t-s}\right) ;  \left| \frac{x-y}{t-s}\right| \leq M \right\}.
\end{equation}
The functions $f^*$ and $y \mapsto V(y,s)$ are continuous imply that the set $Ch(x,s,t)$ is non-empty and bounded, which concludes the third point of the lemma.

Define a new function $\gamma(\theta) := x + \left(\frac{x-y}{t-s}\right)(\theta - t)$, which satisfy the equality $\gamma(0) = \eta'$. Thus, $\eta'$ satisfy
\begin{equation}\label{eq27}
    \frac{x - \eta'}{t} = \frac{x-y}{t-s} = \frac{y - \eta'}{s},
\end{equation}
and so, we have
\begin{align}
    \begin{split}\label{eq28}
        V(x,s,t) & \leq V(y,s) + (t-s) f^*\left( \frac{x-y}{t-s}\right) \\
        & \leq v_0(\eta') + s f^*\left( \frac{y- \eta'}{s}\right) + (t-s) f^*\left( \frac{x-y}{t-s}\right) \\
        & = v_0(\eta') + s f^*\left( \frac{x- \eta'}{t}\right) + (t-s) f^*\left( \frac{x-\eta'}{t}\right)\\ 
        & = v_0(\eta') +  t f^*\left( \frac{x- \eta'}{t}\right).
    \end{split}
\end{align}
Taking the infimum over $\eta'$ gives 
\begin{equation}
    V(x,s,t) \leq V(x,t).
\end{equation}
To prove the other side of the inequality, as the sets $Ch(x,s,t)$ and $Ch(x,t)$ are non-empty, let $\alpha \in Ch(x,s,t)$ and $\beta \in Ch(\alpha,s)$. The convexity of $f^*$ along with the equality, \begin{equation}\label{eq30}\frac{x-\beta}{t} = \frac{x - \alpha}{t-s} \left(1 - \frac{s}{t}\right) + \frac{\alpha - \beta}{s} \left(\frac{s}{t}\right),
\end{equation} 
gives
\begin{equation}\label{eq31}
    t f^*\left(\frac{x-\beta}{t}\right) \leq (t-s)f^*\left(\frac{x - \alpha}{t-s}\right) + s f^*\left(\frac{\alpha - \beta}{s} \right).
\end{equation}
Hence, we have
\begin{align}
    \begin{split}\label{eq32}
        V(x,s,t) &= V(\alpha,s) + (t-s)f^*\left(\frac{x - \alpha}{t-s}\right) \\ 
        &= v_0(\beta) + s f^*\left(\frac{\alpha - \beta}{s} \right) + (t-s)f^*\left(\frac{x - \alpha}{t-s}\right) \\
        &\geq v_0(\beta) + t f^*\left(\frac{x-\beta}{t}\right) \\
        &\geq V(x,t),
    \end{split}
\end{align}
which concludes the fourth point of the lemma.

To prove the latter of the fourth point of the lemma, first observe that for $0 \leq t_1 < t_2$, for all $y \in \re$, there holds
\begin{equation}
    V(x,t_2) \leq V(x,t_1, t_2) \leq V(y,t_1) + (t_2 - t_1) f^*\left( \frac{x-y}{t_2 - t_1}\right).
\end{equation}
Setting $y =x$, we get
\begin{equation}\label{eq34}
    V(x,t_2) - V(x,t_1) \leq f^*(0) (t_2 - t_1).
\end{equation}
For $\nt \in Ch(x,t_2)$, we see that \begin{equation}\label{eq35} 
\left| \frac{x - \nt}{t_2}\right| \leq M
\end{equation}
and so, for all $y \in \re$, we get
\begin{equation}
\begin{split}
    V(x, t_2) - V(x,t_1) \geq v_0(\nt) &+ t_2 f^*\left(\frac{x - \nt}{t_2}\right) \\ &- v_0(y) -  t_1 f^*\left(\frac{x - y}{t_1}\right).
    \end{split}
\end{equation}
Choose $y$ such that \begin{equation}
    \frac{x - \nt}{t_2} = \frac{x-y}{t_1} \iff y-\nt = \frac{t_2 - t_1}{t_2} \left(x - \nt\right),
\end{equation}
so that along with \cref{eq35}, we get
\begin{equation}
    |y - \nt| \leq M|t_2 - t_1|.
\end{equation}
Hence, there holds
\begin{align}
    \begin{split}\label{eq39}
        V(x,t_2) - V(x,t_1) &\geq v_0(\nt) - v_0(y) + (t_2 - t_1) f^*\left( \frac{x - \nt}{t_2}\right) \\
        &\geq -lip(v_0) |\nt - y| - \lambda (t_2 - t_1),
    \end{split}
\end{align}
where, $\lambda := \sup \{ |f^*(z)| ; |z| \leq M\}$. Setting \begin{equation}
    C_1 := |f^*(0)| + lip(v_0) + \lambda,
\end{equation}
and along with \cref{eq34} and \cref{eq39}, we see that
\begin{equation}\label{eq41}
    |V(x,t_2) - V(x,t_1)| \leq C_1 |t_2 - t_1|.
\end{equation}
As a consequence, we get
\begin{align}
    \begin{split}
    |V(x_1,t_1) - V(x_2,t_2)| &\leq |V(x_1,t_2) - V(x_1,t_1)| + | V(x_1,t_2) - V(x_2,t_2)| \\ 
    &\leq lip(v_0)|x_1 - x_2| + C_1 |t_2 - t_1|,
    \end{split}
\end{align}
which concludes that the function $V$ is lipshitz continuous.

To prove the last point of the lemma, first observe that $V$ is a viscosity solution to the Hamilton Jacobi equation,
    \begin{equation}\label{eq43}
        \left\{\begin{aligned}
            V_t + f(V_x) &= 0, \quad x \in \re, t>0 \\
            V(x,0) &= v_0(x), \quad x \in \re.
        \end{aligned}\right.
    \end{equation}
    
The function $V$ is differentiable a.e and from the ``Touching by a $C^1$ function" lemma in \cite[Chapter~10]{Evans}, for a.e $(x,t) \in \re\times(0,\infty)$, the function $V$ satisfy the PDE (\ref{eq43}) point-wise. 

Now, choose $\varphi\in C_c^{\infty}(\re\times[0,\infty))$ and multiply (\ref{eq43}) by $\varphi_x$ to get
\begin{equation}\label{eq44}
    \int_0^{\infty} \int_{-\infty}^{\infty}\big[ V_t \varphi_x + f(V_x) \varphi_x\big] dx dt = 0.
\end{equation}
As the function $V$ is lipshitz, it is differentiable almost everywhere by the Rademacher's theorem and so we see that 
\begin{align}
    \begin{split}
        \int_0^{\infty} \int_{-\infty}^{\infty}V_t \varphi_x dx dt &= -  \int_{-\infty}^{\infty}V(x,0) \varphi_x(x,0) dx - \int_0^{\infty} \int_{-\infty}^{\infty}V(x,t) \varphi_{xt} dx dt \\
        &= \int_{-\infty}^{\infty}(v_0)_x \varphi(x,0)dx + \int_0^{\infty} \int_{-\infty}^{\infty}V_x(x,t) \varphi_{t} dx dt
    \end{split}
\end{align}
Finally, the \cref{eq44}, $u = \frac{\pa}{\pa x}V$ and $u_0(x) = \frac{\pa}{\pa x}v_0$ tells
\begin{equation}
    \int_0^{\infty} \int_{-\infty}^{\infty}\big[ u \varphi_t + f(u) \varphi_x \big] dx dt + \int_{-\infty}^{\infty} u_0(x) \varphi(x,0)dx = 0,
\end{equation}
and that
\begin{equation}
    \lVert u \rVert_{\infty} = \left\lVert \frac{\pa V}{\pa x}\right\rVert_{\infty} \leq lip(v_0) \leq \lVert u_0\rVert_{\infty}.
\end{equation}

\end{proof}

Next, we state a lemma based on \cite{AGM}.
\begin{Lemma}[Stability Result]\label{lem3}
    Let $\{\epsilon_n\}_{n \in \mathbb{N}}$ be a sequence going to $0$ and let the functions $f$ and $f_n := f_{\epsilon_n}$ satisfy the prerequisites  (\ref{it21}) - (\ref{it22}). Furthermore, for $u_0 \in L^{\infty}(\re)$, set $v_0$ to be the primitive (lipshitz) function i.e,
    \[
    v_0(x) = \int_0^x u_0(t)dt.    \]
    Also, let $V$ and $V_n$ be the corresponding value functions defined in (\ref{eq13}) for the flux $f$ and $f_n$ respectively. Then, we have the following results: 
 \begin{enumerate}
     \item We have that $V_n$ converges to $V$ uniformly on compact subsets of $\re \times [0,\infty)$ as $n \rightarrow \infty$.
     \item Let $0 \leq s < t$, $x \in \re$ and set $Ch_n(x,s,t)$ to be the charecteristic set related to $V_n$, $Ch(x,s,t)$ to be the charectersitic set relating $V$ as defined in (\ref{eq13}). 
     Set \begin{equation*}
         \lim_{n \rightarrow \infty} x_n = x, \quad \lim_{n \rightarrow \infty}y_n = y, \quad \lim_{n \rightarrow \infty}t_n = t, \quad \lim_{n \rightarrow \infty}s_n = s.
     \end{equation*}
     Then, for $y_n \in Ch_n(x_n,s_n ,t_n)$, we see that the point $y$ is in $Ch(x,s,t)$.
     \item\label{3oflem3} Let $u := \frac{\pa}{\pa x}V$ and set $u_n := \frac{\pa}{\pa x} V_n$. Then, for any $\varphi \in C_c^{\infty}(\re \times (0,\infty))$, we see that $u$ satisfy \cref{weak} i.e.
     \[
     \int_0^{\infty}\int_{-\infty}^{+\infty} \left(u \varphi_t + f(u) \varphi_x \right) dx dt + \int_{-\infty}^{\infty}u_0(x) \varphi(x,0)\  dx = 0,
     \]
     \[
     \text{and}
     \]
     \begin{equation*}
         \lim_{n \rightarrow \infty} \int_0^{\infty} \int_{- \infty}^{\infty} u_n \varphi \ dx dt = \int_0^{\infty} \int_{- \infty}^{\infty}u \varphi \ dx dt.
     \end{equation*}
 \end{enumerate}
\end{Lemma}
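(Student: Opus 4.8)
The plan is to reduce all three assertions to the structural fact, already used in the proof of \cref{lem2}, that the value functions are \emph{minima} over a bounded range of translation-plus-dual-cost terms — the key point being that this range can be taken uniform in $n$. \textbf{Part (1).} By \cref{lem1}, $f^*_n\to f^*$ uniformly on compact sets, so $C_0:=\sup_n\bigl(|f^*(0)|+|f^*_n(0)|\bigr)<\infty$; combining this with the uniform super-linearity $\lim_{|q|\to\infty}\inf_{0\le\epsilon\le1}f^*_\epsilon(q)/|q|=\infty$ (again \cref{lem1}), one fixes a single $M>0$, independent of $n$, with $f^*(q)\ge(lip(v_0)+2C_0)|q|$ and $f^*_n(q)\ge(lip(v_0)+2C_0)|q|$ for all $|q|\ge M$. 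Running the computation \cref{eq18}--\cref{eq21} verbatim for $f$ and for each $f_n$ with this common $M$ gives
\begin{equation*}
V(x,t)=\min_{|q|\le M}\bigl\{v_0(x-tq)+tf^*(q)\bigr\},\qquad V_n(x,t)=\min_{|q|\le M}\bigl\{v_0(x-tq)+tf^*_n(q)\bigr\}.
\end{equation*}
These are minima over the \emph{same} compact set of functions differing pointwise by at most $t\sup_{|q|\le M}|f^*_n(q)-f^*(q)|$, so $|V_n(x,t)-V(x,t)|\le t\sup_{|q|\le M}|f^*_n-f^*|$ for $t>0$, while $V_n(x,0)=v_0(x)=V(x,0)$; since the supremum tends to $0$ and $t$ is bounded on a compact $K\subset\re\times[0,\infty)$, this forces $\sup_K|V_n-V|\to0$.

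\textbf{Part (2).} Each $f_n$ satisfies the prerequisites (\ref{it21})--(\ref{it22}), so \cref{lem2} applies to the flux $f_n$; in particular the dynamic programming principle gives $V_n(x_n,s_n,t_n)=V_n(x_n,t_n)$, and from $y_n\in Ch_n(x_n,s_n,t_n)$,
\begin{equation*}
V_n(x_n,t_n)=V_n(y_n,s_n)+(t_n-s_n)\,f^*_n\!\left(\frac{x_n-y_n}{t_n-s_n}\right).
\end{equation*}
Then I would let $n\to\infty$: by part (1) and the Lipschitz continuity of $V$ (\cref{lem2}), $V_n(x_n,t_n)\to V(x,t)$ and $V_n(y_n,s_n)\to V(y,s)$; since $s<t$, the quotients $\tfrac{x_n-y_n}{t_n-s_n}$ eventually lie in a fixed compact interval — on which $f^*_n\to f^*$ uniformly — and converge to $\tfrac{x-y}{t-s}$, so $f^*_n\bigl(\tfrac{x_n-y_n}{t_n-s_n}\bigr)\to f^*\bigl(\tfrac{x-y}{t-s}\bigr)$. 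This yields $V(x,t)=V(y,s)+(t-s)f^*\bigl(\tfrac{x-y}{t-s}\bigr)$, whereas the dynamic programming principle for $f$ (\cref{lem2}) gives $V(x,t)=V(x,s,t)\le V(y,s)+(t-s)f^*\bigl(\tfrac{x-y}{t-s}\bigr)$; hence $y$ realizes the infimum defining $V(x,s,t)$, i.e.\ $y\in Ch(x,s,t)$.

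\textbf{Part (3).} Since $f$ is convex and super-linear, part (\ref{it3005}) of \cref{lem2} already gives that $u=\pa_xV$ is a weak solution of \cref{SC1}, which is the first claim. For the convergence of the integrals I would fix $\varphi\in C_c^{\infty}(\re\times(0,\infty))$ with compact support $K$; since $V_n,V$ are Lipschitz, $u_n=\pa_xV_n$ and $u=\pa_xV$ are a.e.\ defined and bounded by $lip(v_0)$, and integrating by parts in $x$ (no boundary terms, $\varphi$ having compact $x$-support) gives $\int_0^\infty\!\int_{\re} u_n\varphi\,dx\,dt=-\int_0^\infty\!\int_{\re} V_n\varphi_x\,dx\,dt$ and the analogous identity for $u,V$. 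Because $V_n\to V$ uniformly on $K$, $|K|<\infty$ and $\varphi_x$ is bounded, the right-hand sides converge, which proves $\int u_n\varphi\to\int u\varphi$.

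The hard part will be Part (1): the whole argument hinges on producing one localization radius $M$ valid simultaneously for $f^*$ and every $f^*_n$, which is exactly the purpose of the uniform super-linearity in \cref{lem1}; without it the comparison of the two minima collapses. Given Part (1), Parts (2) and (3) are routine limit passages, the only care needed in Part (2) being that $\tfrac{x_n-y_n}{t_n-s_n}$ stays in a compact set — which is where the hypothesis $s<t$ is used. If one preferred to re-derive the weak equation for $u$ by stability rather than quoting \cref{lem2}, the awkward term would be $\int\!\int f_n(u_n)\varphi_x$, whose limit would require a.e.\ convergence $u_n\to u$; this is available from the uniform semiconcavity of $x\mapsto V_n(x,t)$, since uniform convergence of semiconcave functions forces a.e.\ convergence of gradients, but quoting \cref{lem2} avoids the issue.
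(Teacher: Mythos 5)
Your proposal is correct, and Parts (2) and (3) follow essentially the paper's own route: for (2), the dynamic programming principle plus passage to the limit along $y_n$, using that $\tfrac{x_n-y_n}{t_n-s_n}$ stays in a compact set where $f^*_n\to f^*$ uniformly; for (3), quoting part (\ref{it3005}) of \cref{lem2} for the weak formulation and integrating by parts against $\varphi_x$ to convert uniform convergence of $V_n$ into convergence of $\int u_n\varphi$. Where you genuinely diverge is Part (1). The paper establishes equi-Lipschitz bounds on $\{V_n\}$ in both variables (via the uniform constant $C_1=\sup_n|f_n^*(0)|+lip(v_0)+\lambda$), invokes Arzela--Ascoli to extract a locally uniformly convergent subsequence, and then identifies the limit as the value function for $f$ by passing to the limit along minimizers $y_{n_k}\in Ch_{n_k}(x,t)$. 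You instead exploit the common localization radius $M$ (supplied, as in the paper, by the uniform super-linearity of \cref{lem1}) to write $V$ and $V_n$ as minima over the \emph{same} compact set $|q|\le M$ of functions differing pointwise by $t|f^*_n(q)-f^*(q)|$, and conclude via $|\min g-\min h|\le\sup|g-h|$. This is shorter, avoids subsequences entirely (and hence the step, left implicit in the paper, of upgrading subsequential convergence to convergence of the full sequence via uniqueness of the limit), and yields an explicit rate $\sup_K|V_n-V|\le T_K\sup_{|q|\le M}|f^*_n-f^*|$. Both arguments ultimately rest on the same two inputs from \cref{lem1}: the uniform convergence $f^*_n\to f^*$ on compacts and a single $M$ localizing all the infima simultaneously.
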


\begin{proof}
From the  assumptions (\ref{it21}) - (\ref{it22}), for all $n\in\mathbb{N}$, we see that
\begin{equation}
    \lim_{|q| \rightarrow \infty} \frac{f_n(q)}{|q|}  \geq \lim_{|q| \rightarrow \infty}\inf_j \frac{f_j(q)}{|q|}  =\infty.
\end{equation}
The proof in the lemma \ref{lem2} suggests that for the constant \[M:= lip(v_0) + 2 \sup_n|f_n^*(0)|,\] we have 
\begin{equation}
    V_n(x,t) = \inf \left\{ v_0(y) + tf_n^*\left(\frac{x-y}{t}\right) ;  \left| \frac{x-y}{t}\right| \leq M \right\}.
\end{equation}
The lemma \ref{lem1} tells that the sequence $\{f_n^*\}$ converges to $f^*$ uniformly on compact subsets of $\re$ and thus, there holds the statement :
\begin{equation}\label{eq50}
    \lambda := \sup_{n \in \mathbb{N}} \sup \{ |f_n^*(z)| ; |z| \leq M\} \text{ is bounded. }
\end{equation}
From \cref{eq41}, for $C_1 := \sup_n |f_n^*(0)| + lip(v_0) + \lambda$, we get
\begin{align}
    \begin{split}
        |V_n(x,t_1) - V_n(x,t_2)| &\leq C_1 |t_1 - t_2|,\\
        |V_n(x_1,t) - V_n(x_2,t)| &\leq lip(v_0) |x_1 - x_2|.
    \end{split}
\end{align}
The Arzela-Ascoli theorem gives the existence of a subsequence $\{V_{n_k}\}$ and a continuous function $V$ such that $V_{n_k}$ converges to $V$ uniformly on compact subsets.

Now, it suffices to show that the function $V$ is in fact the value function for the flux $f$. For $(x,t)\in\re\times(0,\infty)$, $y_n \in Ch_n(x,t)$, we have \[\left|\frac{x-y_n}{t}\right| \leq M,\] and so there is a subsequence $\{y_{n_k}\}$ converging to $y \in \re$. Thus, for $(z,t) \in \re\times(0,\infty)$, there holds
\begin{align}
    \begin{split}
        V(x,t) = \lim_{n_k \rightarrow \infty} V_{n_k}(x,t) &= \lim_{n_k \rightarrow \infty} \left[ v_0 (y_{n_k})  + t f^*_{n_k}\left( \frac{x - y_{n_k}}{t}\right)\right]\\
        &\leq \lim_{n_k \rightarrow \infty} \left[ v_0 (z)  + t f^*_{n_k}\left( \frac{x - z}{t}\right)\right],
    \end{split}
\end{align}
which along with the facts that the function $v_0$ being lipshitz continuous and the functions $f_n^*$ being uniformly continuous, implies
\begin{equation}
    V(x,t) \leq v_0(y) + t f^*\left(\frac{x-y}{t}\right) \leq v_0(z) + t f^*\left(\frac{x-z}{t}\right).
\end{equation}
So, we have
\begin{equation}
    V(x,t) = \inf \left\{ v_0(z) + tf^*\left(\frac{x-z}{t}\right) ;  \left| \frac{x-z}{t}\right| \leq M \right\},
\end{equation}
which is precisely the value function corresponding to $v_0$ and $f^*$ and this concludes the first part of the lemma.

For the second part of the lemma, for $z \in \re$ and $0 \leq s < t$, there holds
\begin{align}
    \begin{split}\label{eq55}
        V_n(x_n, s_n,t_n) &\leq V_n(y_n, s_n) +(t_n - s_n) f_n^*\left( \frac{x_n - y_n}{t_n - s_n}\right) \\
        &\leq V_n(z,s_n) + (t_n - s_n) f^*_n \left( \frac{x_n - z}{t_n -s_n}\right).
    \end{split}
\end{align}
The sequence $\{y_n\}$ is bounded as $\left|\frac{x_n - y_n}{t_n - s_n}\right| \leq M$ and therefore, for $y$ a limit point, there is a subsequence $\{y_{n_k}\}$ converging to $y$. The first part of this lemma and the (Lemma \ref{lem2}) tells that
\begin{align}
    \begin{split}
        V(x,s,t) & = \lim_{n \rightarrow \infty}V_{n_k}(x_{n_k}, s_{n_k}, t_{n_k} ) \\
        &= V(y,s) + (t-s) f^*\left(\frac{x-y}{t-s}\right) \\
        &\leq V(z,s) + (t-s) f^*\left(\frac{x-z}{t-s}\right),
    \end{split}
\end{align}
which tells that $y \in Ch(x,s,t)$ and this proves the second part of the lemma.

For the last part of the lemma, observe that $u = \frac{\pa V}{\pa x}$ satisfies \cref{weak}, by the \cref{it3005} of the  \cref{lem2}. Now, fix a function $\varphi \in C_c^{\infty}(\re \times (0,\infty))$. Since $V$ and $V_n$'s are lipshitz continuous functions, the first part of this \cref{lem3} along with integration by parts gives the following integral equalities :
\begin{align}
    \begin{split}
        \int_0^{\infty} \int_{- \infty}^{\infty}u(x,t) \varphi(x,t) dx dt &= \int_0^{\infty} \int_{- \infty}^{\infty}\left(\frac{\pa}{\pa x}V(x,t)\right) \varphi(x,t) dx dt \\
        &= -\int_0^{\infty} \int_{- \infty}^{\infty}V(x,t) \left(\frac{\pa}{\pa x} \varphi(x,t) \right)dx dt\\ 
        &= - \lim_{n \rightarrow\infty} \int_0^{\infty} \int_{- \infty}^{\infty} V_n(x,t)  \left(\frac{\pa}{\pa x} \varphi(x,t) \right)dx dt\\
        & =  \lim_{n \rightarrow \infty} \int_0^{\infty} \int_{- \infty}^{\infty}\left(\frac{\pa}{\pa x}V_n(x,t)\right) \varphi(x,t) dx dt \\
        &= \lim_{n \rightarrow \infty} \int_0^{\infty} \int_{- \infty}^{\infty}u_n(x,t) \varphi(x,t) dx dt,
    \end{split}
\end{align}
which concludes the third point of the lemma.
\end{proof}

Now, we state the Lax-{O}le\u{\i}nik approach for explicit formula and the one sided inequality. The proof can be found in \cite{Evans}.

\begin{Lemma}
Assume that the function $f : \re \mapsto \re$ is uniformly convex with $f''(\theta) \geq C >0$, for all $\theta$ in $\re$. For $u_0 \in L^{\infty}(\re)$, let $v_0$ be the primitive of $u_0$ and $V$ be the associated value function as in (\ref{eq8}) and (\ref{eq13}).
The function $u := \frac{\pa V}{\pa x}$ is a weak solution to the PDE (\ref{SC1}) and for $ t > 0$, the function $y(x,t) = y_+(x,t)$, defined in (\ref{eq1011}) satisfy,
\begin{itemize}
    \item The mapping $x \mapsto y(x,t)$ is a non-decreasing function.
    \item For a.e $x \in \re$, there holds the equality
    \begin{equation}
        u(x,t) = \left(f^*\right)' \left( \frac{x - y(x,t)}{t}\right)
    \end{equation}
\end{itemize}
Furthermore, the function $u$ satify the {O}le\u{\i}nik-one-sided inequlaity mentioned in (\ref{OOS}) i.e.
\[ u(x+z,t) - u(x,t) \leq C( 1 +t^{-1})z\]
\end{Lemma}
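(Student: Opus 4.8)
The plan is to read off most of the statement from \cref{lem2} and to reduce the rest to two elementary properties of a uniformly convex flux. First, a $C^2$ function with $f''(\theta)\ge C>0$ satisfies $f(p)\ge f(0)+f'(0)p+\tfrac{C}{2}p^2$, so it is convex with super-linear growth; hence \cref{lem2} applies verbatim and already gives that $V(\cdot,t)$ is Lipschitz, that $Ch(x,t)$ is non-empty and bounded, and (its part \cref{it3005}) that $u:=\frac{\pa}{\pa x}V$ is a weak solution of (\ref{SC1}) with $\lVert u\rVert_\infty\le\lVert u_0\rVert_\infty$. Second, in this setting $f':\re\to\re$ is an increasing $C^1$ bijection and $f^\ast$ is $C^2$, strictly convex, with $(f^\ast)'=(f')^{-1}$ and $0<(f^\ast)''\le 1/C$. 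Since $y\mapsto v_0(y)+tf^\ast\!\big(\tfrac{x-y}{t}\big)$ is continuous and coercive, $Ch(x,t)$ is compact, so $y_+(x,t)=\max Ch(x,t)$ is attained and well defined.

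Next I would prove monotonicity of $x\mapsto y_+(x,t)$ by the crossing-characteristics argument. Fix $t>0$, let $x_1<x_2$, and pick any $y_1\in Ch(x_1,t)$, $y_2\in Ch(x_2,t)$. Writing the optimality of $y_i$ at $x_i$ against the competitor $y_{3-i}$ and adding the two inequalities yields
\begin{equation*}
f^\ast\!\Big(\tfrac{x_1-y_1}{t}\Big)+f^\ast\!\Big(\tfrac{x_2-y_2}{t}\Big)\;\le\;f^\ast\!\Big(\tfrac{x_1-y_2}{t}\Big)+f^\ast\!\Big(\tfrac{x_2-y_1}{t}\Big).
\end{equation*}
Calling the left-hand arguments $a,b$ and the right-hand ones $c,d$, one checks $a+b=c+d$ and that, if $y_1>y_2$, then $a<c<b$ and $a<d<b$ strictly (this is exactly where $x_1<x_2$ is used). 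Strict convexity of $f^\ast$ then forces $f^\ast(c)+f^\ast(d)<f^\ast(a)+f^\ast(b)$, contradicting the displayed inequality. Hence $y_1\le y_2$ for every such choice, and in particular $y_+(x_1,t)\le y_+(x_2,t)$.

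For the explicit formula, recall $V(\cdot,t)$ is differentiable at a.e.\ $x$; fixing such an $x$ and any $y\in Ch(x,t)$, the smooth function $h\mapsto v_0(y)+tf^\ast\!\big(\tfrac{x+h-y}{t}\big)$ dominates $V(\cdot,t)$ and agrees with it at $h=0$, so differentiating gives $u(x,t)=\frac{\pa}{\pa x}V(x,t)=(f^\ast)'\!\big(\tfrac{x-y}{t}\big)$; taking $y=y_+(x,t)$ gives the claimed identity. Finally, for the Oleĭnik inequality fix $t>0$, $z>0$, and an a.e.\ $x$ at which the formula holds at both $x$ and $x+z$; by monotonicity $y_1:=y_+(x,t)\le y_+(x+z,t)=:y_2$, so
\begin{equation*}
\tfrac{x+z-y_2}{t}-\tfrac{x-y_1}{t}=\tfrac{z-(y_2-y_1)}{t}\le\tfrac{z}{t}.
\end{equation*}
If this difference is $\le 0$, then $u(x+z,t)-u(x,t)\le 0$ since $(f^\ast)'$ is non-decreasing; otherwise $(f^\ast)''\le 1/C$ gives $u(x+z,t)-u(x,t)\le\tfrac{1}{C}\cdot\tfrac{z}{t}\le\tfrac{1}{C}(1+t^{-1})z$, which is (\ref{OOS}) with constant $1/C$.

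The only step requiring care is the crossing-characteristics argument: one must verify that when $y_1>y_2$ the four arguments of $f^\ast$ are genuinely interlaced with equal pairwise sums and no coincidences, so that strict convexity yields a strict contradiction; everything else follows mechanically from \cref{lem2} and the stated regularity of $f^\ast$.
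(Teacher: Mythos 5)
Your proof is correct, and it is essentially the classical argument from Evans that the paper itself defers to (the paper offers no proof of this lemma, stating only that one can be found in \cite{Evans}): super-linearity from $f''\ge C$ so that \cref{lem2} supplies the weak-solution part, the crossing-characteristics inequality with strict convexity of $f^\ast$ for monotonicity of $y_+(\cdot,t)$, the touching/envelope argument at points of differentiability for the formula $u=(f^\ast)'\bigl(\tfrac{x-y_+}{t}\bigr)$, and the bound $(f^\ast)''\le 1/C$ together with monotonicity of $y_+$ for the one-sided estimate. All the delicate points (compactness of $Ch(x,t)$ so that $y_+$ is attained, the strict interlacing $a<c,d<b$ with $a+b=c+d$, and the case split when the arguments of $(f^\ast)'$ decrease) are handled correctly.
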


\begin{Remark}
Here, since $f$ is uniformly convex, we have $(f^*)' = (f')^{-1}$.
\end{Remark}

\newpage
\section*{\bf Proof of the Main Theorems.}
First, let's recall some known results whose proofs can be found in \cite{Evans}.

Assume that the function $f : \re \mapsto \re$ is uniformly convex with $f''(\theta) \geq C > 0$, for all $\theta \in \re$. Choose a non-negative function $\rho \in C_c^{\infty}(\re^2)$ such that 
\begin{itemize}
    \item The support of the function $\rho$ satisfies
    \[ supp(\rho) \subset \{ (x,t) \in \re^2 ; t \leq 0\}.\]
    \item The intgeral of $\rho$ is 1, i.e
    \begin{align}\label{eq59}
        \int_{\re^2} \rho(x,t) dx dt = 1.
    \end{align}
\end{itemize}
For $\epsilon >0$, let \[\left\{ \rho_{\epsilon}(x,t) := \frac{1}{\epsilon ^2} \rho \left(\frac{x}{\epsilon}, \frac{t}{\epsilon}\right)\right\}, \] be the mollifying sequence for the function $\rho$ and for $h \in L^{\infty}(\re)$, set 
\begin{align}
    h_{\epsilon}(x,t) := \left(\rho_{\epsilon} \ast h \right)(x,t).
\end{align}
Then, the function $h_{\epsilon} \in C^{\infty}(\re^2)$ and there holds the inequality,
\begin{align}
    \lVert h_{\epsilon} \rVert_{\infty} \leq \lVert h \rVert_{\infty}.
\end{align}
Suppose there exist $C_1 >0$ such that for all $t >0$, for a.e $x \in \re$, $z > 0$, the function $h$ satisfy
\begin{align}\label{eq62}
    \frac{h(x+z,t)-h(x,t)}{z} \leq \frac{C_1}{t},
\end{align}
Then, we have
\begin{equation}\label{eq63}
    \begin{split}
        \frac{\pa}{\pa x}& h_{\epsilon}(x,t) = \lim_{z \rightarrow 0^+}\frac{h_{\epsilon}(x+z,t) - h_{\epsilon}(x,t)}{z}\\
        &= \lim_{z \rightarrow 0^+} \int_{\tau = -\infty}^0 \int_{y= -\infty}^{\infty} \left( \frac{h(x-y+z, t- \tau) - h(x-y,t-\tau)}{z}\right)\rho_{\epsilon}(y,\tau) dy d\tau \\
        & \leq \int_{\tau = -\infty}^0 \int_{y= -\infty}^{\infty}  \left(\frac{C_1}{t - \tau}\right) \rho_{\epsilon}(y,\tau) dy d\tau \\
        & \leq \frac{C_1}{t}.
    \end{split}
\end{equation}

For $h_1, h_2 \in L^{\infty}(\re^2)$, define the quantities, 
\begin{align}
    \begin{split}\label{eq64}
        H(x,t) &:= \frac{f(h_1(x,t)) - f(h_2(x,t))}{h_1(x,t) - h_2(x,t)} \\
        & = \int_0^1 f'\Big[ \lambda h_1(x,t) + (1-\lambda) h_2(x,t) \Big] d\lambda,
    \end{split}
\end{align}
\begin{align}
    \begin{split}\label{eq65}
        H_{\epsilon}(x,t) := \int_0^1 f'\Big[ \lambda h_{1\epsilon}(x,t) + (1-\lambda) h_{2\epsilon}(x,t) \Big]
        d\lambda,
    \end{split}
\end{align}

\begin{align}
    \begin{split}\label{eq66}
    M &:= \max_{\lambda \in [0,1]} \lVert \lambda h_1 + (1-\lambda)h_2 \rVert_{\infty},\\
    L &:= \max_{\theta \in [-M,M]}|f'(\theta)|, \\
    L_1 &:= \max_{\theta \in [-M,M]}|f''(\theta)|.
    \end{split}
\end{align}
The notations (\ref{eq64}) - (\ref{eq66}) yield the following conclusions:
\begin{enumerate}
    \item\label{it321} The value $\max\left(\lVert H\rVert_{\infty}, \lVert H_{\epsilon}\rVert_{\infty}\right)$ is less than or equal to $L$.
    \item The function $H_{\epsilon}$ is in the space $C^{1}(\re^2)$.
    \item The functions $H_{\epsilon}$ converges to $H$ in $L^1_{loc}(\re^2)$ as $\epsilon$ goes to $0$.
    \item Suppose that $h_1, h_2$ satisfy (\ref{eq62}), then from (\ref{eq63}), for $t>0$, we have
    \begin{equation}\label{eq67}
        \frac{\pa}{\pa x}\Big[ \lambda h_{1\epsilon}(x,t) + (1-\lambda) h_{2\epsilon}(x,t) \Big] \leq \frac{C_1}{t}.
    \end{equation}
    \item As $f''$ is assumed to be positive, the relation (\ref{eq65}) tells that for $t>0$, we have
    \begin{align}
        \begin{split}\label{eq68}
            \frac{\pa H_{\epsilon}}{\pa x}(x,t) &= \int_0^1 f''(\lambda h_{1,\epsilon}+ (1-\lambda) h_{2,\epsilon}) \frac{\pa}{\pa x} \left[\lambda h_{1\epsilon} + (1-\lambda) h_{2,\epsilon}\right] d\lambda \\
            &\leq \frac{C_1}{t}\int_0^1 f''(\lambda h_{1,\epsilon}+ (1-\lambda) h_{2,\epsilon}) d\lambda \\
            &\leq \frac{C_1 L_1}{t}.
        \end{split}
    \end{align}
\end{enumerate}
Assuming the properties mentioned in (\ref{eq59}) - (\ref{eq68}), we have the following lemma.
\begin{Lemma}\label{lem5}
 Let $f$ be a uniformly convex function with $u_{10}, u_{20} \in L^{\infty}(\re)$ and let $u_1$ and $u_2$ be two weak solutions to the PDE (\ref{SC1}) satisfying the {O}le\u{\i}nik-one-sided inequality (\ref{OOS}). Then, for $a < b$, $0 < \tau < T$, $\psi \in C_c^{\infty}((a,b)\times (\tau,T))$, there holds that 
\begin{equation}
    \left| \int_0^{\infty} \int_{- \infty}^{\infty} (u_1 - u_2)\psi \ dx dt \right| \leq \lVert \psi \rVert_{\infty} (T-\tau) \int_{a-LT}^{b+LT} |u_{10}(x) - u_{20}(x)| dx.
\end{equation}
\end{Lemma}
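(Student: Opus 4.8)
The plan is a duality (adjoint) argument for the linear transport equation satisfied by $w:=u_1-u_2$, in which the {O}le\u{\i}nik one-sided inequality \cref{OOS} supplies exactly the one-sided bound on the drift needed to solve the associated \emph{backward} transport problem with the required estimates. Set $w:=u_1-u_2$ and let $H$ be as in \cref{eq64}, so that $f(u_1)-f(u_2)=Hw$ and $\lVert H\rVert_\infty\le L$. Subtracting the two instances of \cref{weak} and regularising the test function (legitimate since $w,Hw\in L^\infty$) gives, for every Lipschitz $\varphi$ with compact support in $\re\times[0,\infty)$,
\[\int_0^{\infty}\int_{-\infty}^{\infty}\big(w\,\varphi_t+Hw\,\varphi_x\big)\,dx\,dt=-\int_{-\infty}^{\infty}(u_{10}-u_{20})\,\varphi(x,0)\,dx.\]
Since $u_i$ is the Lax--{O}le\u{\i}nik solution, $t\mapsto u_i(\cdot,t)$ is continuous into $L^1_{loc}(\re)$ with $u_i(\cdot,0^+)=u_{i0}$, so the identity ``restarts'' at any $t_0>0$: for Lipschitz $\varphi$ compactly supported in $\re\times[t_0,\infty)$,
\[\int_{t_0}^{\infty}\int_{-\infty}^{\infty}\big(w\,\varphi_t+Hw\,\varphi_x\big)\,dx\,dt=-\int_{-\infty}^{\infty}w(x,t_0)\,\varphi(x,t_0)\,dx.\]

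Next, fix $\psi\in C_c^\infty((a,b)\times(\tau,T))$ and $\epsilon>0$, and recall from the preamble that $H_\epsilon\in C^1(\re^2)$, $\lVert H_\epsilon\rVert_\infty\le L$ (\cref{it321}), $H_\epsilon\to H$ in $L^1_{loc}(\re^2)$, and — the crucial point — that \cref{OOS} yields, via \cref{eq68}, the \emph{one-sided} bound $\pa_xH_\epsilon(x,t)\le \kappa/t$ on $\re\times(0,\infty)$ with $\kappa$ depending only on $f$, $\lVert u_{10}\rVert_\infty$, $\lVert u_{20}\rVert_\infty$ and $T$. I would solve the backward transport problem $\phi^\epsilon_t+H_\epsilon\phi^\epsilon_x=\psi$ on $\re\times(0,T)$ with $\phi^\epsilon(\cdot,T)\equiv 0$ by the method of characteristics $\dot X=H_\epsilon(X,s)$, obtaining $\phi^\epsilon(x,t)=-\int_t^T\psi(X(s;x,t),s)\,ds$ (and $\phi^\epsilon\equiv 0$ for $t\ge T$). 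Then $\phi^\epsilon$ is $C^1$, Lipschitz with compact support on every strip $\re\times[t_0,T]$, and: (i) $\lVert\phi^\epsilon\rVert_\infty\le(T-\tau)\lVert\psi\rVert_\infty$ because the source acts only for $s\in(\tau,T)$; (ii) $\supp\phi^\epsilon(\cdot,t)\subset\big(a-L(T-t),\,b+L(T-t)\big)\subset(a-LT,\,b+LT)$ because characteristics move with speed $\le L$; (iii) differentiating the characteristic ODE and using $\pa_xH_\epsilon\le\kappa/s$ gives the Gr\"onwall factor $\exp\big(\int_t^s\pa_xH_\epsilon\big)\le(s/t)^{\kappa}$, whence $\lVert\phi^\epsilon_x(\cdot,t)\rVert_\infty\le(T-\tau)\lVert\psi_x\rVert_\infty\,(T/t)^{\kappa}$, a bound that is finite on each $\{t\ge t_0\}$, uniformly in $\epsilon$.

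Now, for $t_0\in(0,\tau)$, insert $\varphi=\phi^\epsilon$ into the restarted identity; since $\phi^\epsilon_t+H_\epsilon\phi^\epsilon_x=\psi$ and $\supp\psi\subset\{t>\tau>t_0\}$,
\[\int_0^{\infty}\int_{-\infty}^{\infty}w\,\psi\,dx\,dt=-\int_{-\infty}^{\infty}w(x,t_0)\,\phi^\epsilon(x,t_0)\,dx+\int_{t_0}^{T}\int_{-\infty}^{\infty}(H_\epsilon-H)\,w\,\phi^\epsilon_x\,dx\,dt.\]
By (i)--(ii) the first term is at most $(T-\tau)\lVert\psi\rVert_\infty\int_{a-LT}^{b+LT}|w(x,t_0)|\,dx$ in modulus; by (iii) and the $L^1$-convergence $H_\epsilon\to H$ on the bounded set $(a-LT,b+LT)\times(t_0,T)$, the second term is $\le \lVert w\rVert_\infty(T-\tau)\lVert\psi_x\rVert_\infty(T/t_0)^{\kappa}\lVert H_\epsilon-H\rVert_{L^1((a-LT,b+LT)\times(t_0,T))}\to 0$ as $\epsilon\to 0$. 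Hence, for every $t_0\in(0,\tau)$, $\big|\int_0^{\infty}\int_{-\infty}^{\infty}w\psi\big|\le(T-\tau)\lVert\psi\rVert_\infty\int_{a-LT}^{b+LT}|w(x,t_0)|\,dx$, and letting $t_0\to 0^+$, using $w(\cdot,t_0)\to u_{10}-u_{20}$ in $L^1\big((a-LT,b+LT)\big)$, gives the estimate claimed in \cref{lem5}.

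The main obstacle is item (iii): the dual equation must be solved \emph{backward} in time, and the only control on the focusing/spreading of characteristics — hence on $\lVert\phi^\epsilon_x\rVert_\infty$ — is the \emph{one-sided} bound $\pa_xH_\epsilon\le\kappa/t$, which is the mirror image of the {O}le\u{\i}nik inequality; a two-sided Lipschitz bound (equivalently, Lipschitz regularity of $u_1,u_2$) is false across shocks, so it is essential that only the one-sided bound is used. This bound forces $\lVert\phi^\epsilon_x(\cdot,t)\rVert_\infty$ to grow no faster than $t^{-\kappa}$, which is what makes the commutator $\iint(H_\epsilon-H)w\,\phi^\epsilon_x$ vanish as $\epsilon\to0$ on every region $\{t\ge t_0\}$ but \emph{not} down to $t=0$; that is precisely why the weak formulation is restarted at $t_0>0$ and the strong $L^1_{loc}$-attainment of the initial data (a property of the Lax--{O}le\u{\i}nik construction) is invoked to finish.
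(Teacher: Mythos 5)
Your core argument is the same duality argument as the paper's: solve the backward transport equation $\veps_t+H_\epsilon\veps_x=\psi$ by characteristics, use the one-sided bound $\pa_x H_\epsilon\le C/t$ coming from \cref{OOS} to control $\veps_x$ by a negative power of $t$ away from $t=0$, bound $|\veps|$ by $(T-\tau)\lVert\psi\rVert_\infty$ and localize its support to $[a-LT,b+LT]$ by finite speed of propagation, and kill the commutator $\iint(H_\epsilon-H)\,w\,\veps_x$ using the $L^1_{loc}$ convergence $H_\epsilon\to H$. The one place you diverge is in how the initial data enters, and that is where there is a gap. You restart the weak formulation at $t_0>0$, obtain the boundary term $\int w(x,t_0)\veps(x,t_0)\,dx$, and then let $t_0\to0^+$ invoking $w(\cdot,t_0)\to u_{10}-u_{20}$ strongly in $L^1_{loc}$. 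That strong initial-trace property is not among the hypotheses of the lemma: a weak solution in the sense of \cref{weak} is only known to attain its data weakly-$*$ (through $V(\cdot,t)\to v_0$), and weak-$*$ convergence only yields $\liminf_{t_0\to 0}\int|w(\cdot,t_0)|\ge\int|w_0|$, which is the wrong direction for your final inequality. Strong $L^1_{loc}$ attainment is in fact true for the Lax--Ole\u{\i}nik solution, but it is a nontrivial theorem proved nowhere in this note (which deliberately avoids the Kru\v{z}kov machinery), so as written your last step is unsupported.

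The paper sidesteps this by testing all the way down to $t=0$: the weak formulation \cref{weak} already contains the term $\int u_{i0}(x)\varphi(x,0)\,dx$, so the boundary contribution is exactly $\int w_0(x)\veps(x,0)\,dx$, featuring the true initial data with no trace theorem needed. The price is that the commutator now lives on all of $(0,T)$, where the bound on $\veps_x$ blows up like $t^{-C_2}$; the paper splits it at an auxiliary time $\tau_1$, bounds the piece on $(0,\tau_1)$ by $2L\lVert w\rVert_\infty\,\tau_1\int|\veps_x(x,\tau)|\,dx$ using $|H_\epsilon-H|\le 2L$ together with the fact that $\int|\veps_x(\cdot,t)|\,dx$ does not increase as $t$ decreases below $\tau$ (the backward flow is order-preserving since $\pa_x\chi\ge0$), and sends $\epsilon\to0$ before $\tau_1\to0$. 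If you replace your restart-at-$t_0$ step by this splitting, your argument coincides with the paper's and is complete.
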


\begin{proof}
Setting $u(x,t) \equiv 0$ for $t<0$, we can assume that the functions $u_i \in L^{\infty}(\re^2)$, for $i = 1,2$. For $i=1,2$, define $h_i$ to be $ u_i$ and $h_{i\epsilon}$ to be $ u_{i\epsilon}$. Furthermore, set $H$, $H_{\epsilon}$ to be the functions as in (\ref{eq64}) and (\ref{eq65}). Now, for $(x,t) \in \re^2$, define the following:
\begin{itemize}
    \item $w_0(x) := u_{10}(x) - u_{20}(x)$,
    \item $w(x,t) := u_1(x,t) - u_2(x,t)$,
    \item A function $\chi(\theta) \equiv \chi(\theta,x,t)$ which solves the ODE :
    \begin{equation}\label{eq70}
    \left\{ \begin{aligned}
        \frac{d \chi}{d \theta}(\theta) &= H_{\epsilon}(\chi(\theta,x,t),\theta) \\
        \chi(t,x,t) &= x
        \end{aligned}\right.
        \end{equation}
    \item The function $\veps$ which is a solution to
    \begin{equation}\label{eq71}
        \left\{
        \begin{aligned}
            \left( \frac{\pa \veps}{\pa t} + H_{\epsilon} \frac{\pa \veps}{\pa x}\right)(x,t) &= \psi(x,t), \quad &t<T, x \in \re, \\
            \veps(x,T) &= 0, \quad &\forall x \in \re.
        \end{aligned}\right.
    \end{equation}
    i.e the function $\veps$ is given by
    \begin{equation}\label{eq72}
        \veps(x,t) = - \int_t^T \psi(\chi(\theta,x,t),\theta) d\theta.
    \end{equation}
    \item View $H(x,t)$ and $H_{\epsilon}(x,t)$ as functions of the form $H(\xi, t)$ and $H_{\epsilon}(\xi,t)$. The \cref{eq70} gives
    \begin{equation}\label{eq73}
        \left\{\begin{aligned}
            \frac{d}{d \theta}\left(\frac{\pa \chi}{\pa x}\right) &= \frac{\pa H_{\epsilon}}{\pa \xi}\left(\chi(\theta,x,t),\theta\right) \frac{\pa \chi}{\pa x}(\theta,x,t), \\
            \frac{\pa \chi}{\pa x}(t,x,t) &= 1,
        \end{aligned}\right.
    \end{equation}
    which tells that
    \begin{equation}\label{eq74}
        \frac{\pa \chi}{\pa x}(\theta,x,t) = \int_t^{\theta} exp\left(\int_t^s \frac{\pa H_{\epsilon}}{\pa \xi}\left(\chi(\alpha,x,t),\alpha\right)d\alpha\right) ds.
    \end{equation}
\end{itemize}
Thus, the function $\frac{\pa \chi}{\pa x}$ is non negative and from (\ref{eq68}), along with $C_2 = C_1 L_1$, we have
\begin{equation}\label{eq75}
\frac{\pa \chi}{\pa x}(\theta, x ,t) \leq \int_t^{\theta}exp\left(C_2 \log\left(\frac{s}{t}\right)\right) ds \leq \frac{\theta^{C_2 +1}}{C_2 t^{C_2}}.
\end{equation}
Hence, for $0<t<T$ and for $C_3 := \frac{T^{C_2 + 2}}{C_2 (C_2 +1)}$, we see that 
\begin{align}
    \begin{split}\label{eq76}
        \left|\frac{\pa \veps}{\pa x}(x,t)\right| &= \left| \int_t^T \frac{\pa \psi}{\pa \xi}(\chi(\theta,x,t),\theta) \frac{\pa \chi}{\pa x}(\theta,x,t) d\theta \right| \\
        &\leq \frac{C_3}{t^{C_2}}\left|\left| \frac{\pa \psi}{\pa \xi}\right|\right|_{\infty}.
    \end{split}
\end{align}
As $\supp\psi$ is contained in $\{(x,t) ; t > \tau\}$, for $0<t<\tau, x \in \re, t < \theta<\tau$, we have
\begin{equation}\label{eq77}
\begin{split}
    \frac{d}{d \theta} \varphi(\chi(\theta,x,t),\theta) &= \left( \frac{\pa \veps}{\pa t} + H_{\epsilon}\frac{\pa \veps}{\pa x}\right) \left(\chi(\theta,x,t),\theta\right) \\
    &= \psi(\chi(\theta,x,t),\theta) \\
    &= 0.
    \end{split}
\end{equation}
This tells that $\veps(x,t) = \veps(\chi(\tau,x,t),\tau)$. Thus, the mean value theorem implies for $0<t<\tau$,
\begin{equation}\label{eq78}
    \int_{-\infty}^{\infty} \left| \frac{\pa \veps}{\pa x}(x,t)\right|dx \leq \int_{-\infty}^{\infty}\left| \frac{\pa \veps}{\pa x}(x,\tau)\right| dx.
\end{equation}
Now, since $u_1$ and $u_2$ are weak solutions, for $0<\tau_1<\tau<T$, there holds
\begin{equation}\label{eq79}
\begin{split}
\int_{-\infty}^{\infty}\int_0^{\infty} w \psi dx dt &= \int_{-\infty}^{\infty}\int_0^{\infty} w\left(\frac{\pa \veps}{\pa t} + H_{\epsilon}\frac{\pa \veps}{\pa x}\right) dx dt \\ 
&:= -I_1 + I_2 + I_3,
\end{split}
\end{equation}
where, the terms $I_j$'s are given by
\begin{equation}
\begin{split}
I_1 &:=\int_{-\infty}^{\infty}w_0 (x) \veps(x,0) dx ,\\ 
I_2 &:=  \int_{0}^{\tau_1}\int_{-\infty}^{\infty} \left(H_{\epsilon} - H\right) \frac{\pa \veps}{\pa x}w dx dt, \\
I_3 &:= \int_{\tau_1}^T\int_{-\infty}^{\infty}\left(H_{\epsilon}-H\right) \frac{\pa \veps}{\pa x}w dx dt.
    \end{split}
\end{equation}
\underline{Estimation of $I_1,I_2,I_3$ :}

From \cref{eq70}, for $x \in \re$, $0\leq t <\theta<T$, we have
\begin{equation}\label{equation85}
    \chi(\theta,x,t) = \chi(t,x,t) + \int_t^{\theta} H_{\epsilon}(\chi(s,x,t),s) ds.    
\end{equation}
For $0<\theta<T$, the \cref{eq41} and \cref{equation85} tells
\begin{equation}
    |\chi(\theta,x,0) - x| \leq \lVert H_{\epsilon} \rVert_{\infty}T \leq LT.
\end{equation}
Thus, there holds
\begin{equation}\label{eq82}
    x - LT \leq \chi(\theta,x,t) \leq x + LT.
\end{equation}
Therefore, if $x+LT \leq a \iff x \leq a-LT$, then we have $\chi(\theta,x,t) \leq a$. If $x-LT \geq b \iff x \geq b+LT$, then we have $\chi(\theta,x,t) \geq b$.

Thus, the \cref{eq72} tells $\veps(x,t) = 0$ for $x\notin [a-LT,b+LT]$ and  
\begin{equation}\label{eq83}
    |\veps(x,t)| \leq \lVert \psi\rVert_{\infty}(T - \tau).
\end{equation}
Therefore, $I_1$ can be estimated as
\begin{equation}\label{eq84}
    \begin{split}
        |I_1| & = \left| \int_{-\infty}^{\infty} w_0(x) \veps(x,0) dx \right| \\
        &\leq \lVert \psi\rVert_{\infty}(T-\tau) \int_{a-LT}^{b+LT}|w_0(x)| dx.
    \end{split}
\end{equation}

For the part of $I_2$, for $0< \tau_1 < \tau < T$, the conclusion (\ref{it321}) and (\ref{eq82}) yields
\begin{equation}\label{eq85}
    \begin{split}
        |I_2| &= \left| \int_0^{\tau_1} \int_{-\infty}^{\infty} (H_{\epsilon}-H) \frac{\pa \veps}{\pa x} w dx dt \right| \\
        &\leq \underbrace{2 \lVert w \rVert_{\infty} L \tau_1 \int_{-\infty}^{\infty} \left|\frac{\pa \veps}{\pa x}(x,\tau)\right| dx.}_{\text{goes to $0$ as $\tau_1$ goes to $0$.}}
    \end{split}
\end{equation}
Lastly, the estimation on $I_3$ can be done in the following way. The conclusion (\ref{it321}) gives the convergence of $H_{\epsilon}$ to $H$ in $L^1_{loc}$. Therefore, (\ref{eq76}) and (\ref{eq83}) gives
\begin{equation}
    \begin{split}
        |I_3| &= \left| \int_{\tau_1}^T \int_{-\infty}^{\infty} (H_{\epsilon} - H) \frac{\pa \veps}{\pa x} w dx dt \right| \\
        &\leq \int_{\tau_1}^T\int_{a-LT}^{b+LT} |H_{\epsilon} - H| \left| \frac{\pa \veps}{\pa x}\right| |w| dx dt \\
        &\leq \underbrace{\frac{C_3 \lVert\frac{\pa \psi}{\pa \xi}\rVert_{\infty}}{\tau_1^{C_2}}\lVert w\rVert_{\infty}\int_{\tau}^T\int_{a-LT}^{b+LT} |H_{\epsilon} - H| dx dt.}_{\text{goes to $0$ as $\epsilon$ goes to $0$.}} 
    \end{split}
\end{equation}
Sending $\epsilon$ to $0$ and then $\tau_1$ to $0$, tells $|I_2| + |I_3| \rightarrow 0.$ Thus, by (\ref{eq79}) and (\ref{eq84}), we see that
\begin{equation}\label{equar95}
    \left| \int_{-\infty}^{\infty}\int_0^T w \psi dx dt\right| \leq\lVert \psi\rVert_{\infty}(T-\tau) \int_{a - LT}^{b+LT}|w_0(x)| dx,
\end{equation}
which proves the lemma.

\end{proof}

Using the above results, we now prove the 
Theorem (\ref{thm1}), the Theorem (\ref{thm2}) and the Theorem (\ref{thm3}).

\begin{proof}[\bf{Proof of the Theorem (\ref{thm1})}]
The first four parts of the Theorem (\ref{thm1}) follows from the Lemma (\ref{lem2}). To conclude the theorem, we have to prove the last part of it. Define $f_{\eta}$ as in (\ref{mol}), with renaming $\epsilon$ to be $\eta$, set
\[f_{\eta}(p) := (f\ast \alpha_{\eta})(p) + \eta p^2.\]
Then, $\{f_{\eta}\}$ is uniformly convex, smooth and converges to $f$ on compact subsets by the \cref{lem1}. Since, $f$ is convex, $f'$ exists almost everywhere and the Dominated Convergence Theorem gives
\begin{equation}\label{eq89}
    f_{\eta}'(p) = (f' \ast \alpha_{\eta})(p) + 2 \eta p.
\end{equation}

Set 
\begin{equation*}
    \begin{split}
    M &:= \max \{\lVert u_{10}\rVert_{\infty},\lVert u_{20}\rVert_{\infty}\} \\
    I_{\eta} &:= [-M-\eta , M + \eta] \\ 
    L &:= \limsup_{\eta \rightarrow 0} \{ |f'(q)| ; q \in I_{\eta}\}.
    \end{split}
\end{equation*} 

Then, for $|p| \leq M$, we have
\begin{equation}
    \begin{split}
        |f_{\eta}'(p)| &\leq |(f'\ast \alpha_{\eta})(p)| + 2 \eta M \\
        &\leq \sup \{ |f'(q)| ; q \in I_{\eta}\} + 2 \eta M,
    \end{split}
\end{equation}
and hence, we see that
\begin{equation}\label{eq91}
    \begin{split}
        \lim_{\eta \rightarrow 0} |f_{\eta}'(p)| \leq L.
    \end{split}
\end{equation}
 For $i=1,2$, define $u_{i\eta}$ to be the weak solution to the PDE :
\begin{equation}\label{92}
\left\{ \begin{aligned}
    \frac{\pa}{\pa t}u_{i\eta} + \frac{\pa}{\pa x}\big[f_{i\eta}(u_{i\eta})\big] &= 0 ; \quad x \in \re, t>0, \\
    u_{i\eta}(x,0) &= u_{i0}(x) ; \quad x \in \re,
\end{aligned} \right.
\end{equation}
and set $\omega_{\eta} := u_{1\eta} - u_{2\eta}$. Also, set $w_0 := u_{10} - u_{20}$.

For $0 < \tau < T$, $a<b$, $\psi \in C_c^{\infty}((a,b)\times(\tau,T))$, the Lemma (\ref{lem5}) gives 
\begin{equation}
    \left| \int_{-\infty}^{\infty} \int_0^{\infty} \omega_{\eta} \psi dx dt \right| \leq\lVert \psi\rVert_{\infty}(T-\tau)\int_{a - LT}^{b+LT}|\omega_0(x)|dx.
\end{equation}

On the compact set $\supp(\psi)$, \cref{lem1} tells that $f_{\eta}$ converges to $f$. Now, from the Stability Lemma (\ref{lem3}), $\omega_{\eta}$ converges to $\omega$ in $\mathscrsfs{D}'(\re \times(0,\infty))$ as $\eta$ goes to $0$. Thus, sending $\eta$ to $0$, we see that
\begin{equation}
    \left|\int_a^b \int_{\tau}^T \omega \psi dx dt \right| \leq\lVert \psi\rVert_{\infty}(T- \tau) \int_{a - LT}^{b+LT}|\omega_0(x)| dx.
\end{equation}
Now, letting $\psi \rightarrow \frac{\omega}{|\omega|}$ gives
\begin{equation}
    \int_a^b \int_{\tau}^T |\omega|  dx dt \leq (T- \tau) \int_{a - LT}^{b+LT}|\omega_0(x)| dx.
\end{equation}
Thus for a.e $T>0$, by the Lebesgue differentiation theorem (refer \cite{Stein}), we have
\begin{equation}
    \lim_{\tau \rightarrow T} 
    \frac{1}{T - \tau}\int_{\tau}^T\left(\int_a^b | \omega(x,\theta)| dx \right)d\theta = \int_a^b |\omega(x,T)|dx,
\end{equation}
which proves the fifth point of the first theorem i.e.
 \begin{equation}\label{infinite} 
 \int_a^b |\omega(x,T)|dx \leq \int_{a - LT}^{b+LT}|\omega_0(x)| dx. 
 \end{equation}

\end{proof}

\begin{proof}[\bf{Proof of the Theorem (\ref{thm2}).}]
Set $\omega_0 := u_{10} - u_{20}$ which is non positive function and let $0 < \tau<T$, $\psi \in C_0^1(\re\times(0,T))$ be a function such that
\begin{equation}
    \psi(x,t)  \geq 0 , \quad \forall (x,t) \in \re\times(0,\infty).
\end{equation}
Let $\veps$ be as in (\ref{eq71}) which tells $\veps(x,0) \leq 0$ for all $x \in \re$ by (\ref{eq72}) and by assumption, $\omega_0 \leq 0.$ Now, from (\ref{eq72}) and (\ref{eq79}), we have
\begin{equation}\label{eq102}
    \begin{split}
        &\quad \int_{-\infty}^{\infty} \int_0^{\infty} (u_1(x,t) - u_2(x,t)) \psi(x,t) dx dt \\
        &= - \int_{-\infty}^{\infty} \omega_0(x) \veps(x,0) dx  + \int_0^{T} \int_{-\infty}^{\infty}  (H_{\epsilon} - H) \frac{\pa \veps}{\pa x} \omega dx dt \\
        &\leq \int_0^{T} \int_{-\infty}^{\infty} (H_{\epsilon} - H) \frac{\pa \veps}{\pa x} \omega dx dt \quad \underset{ \epsilon\rightarrow 0}{\longrightarrow} 0.
    \end{split}
\end{equation}
Thus, for all positive $\psi \in C_0^1((a,b) \times (\tau,T))$, we see that
\begin{equation}
    \int_a^b \int_{\tau}^T \big[ u_1(x,t) -u_2(x,t) \big]\psi(x,t) dx dt \leq 0,
\end{equation}
which tells that for a.e $(x,t) \in \re\times(0,\infty)$, the functional inequality 
\begin{equation}
    u_1(x,t) \leq u_2(x,t),
\end{equation}
which proves the first part of the theorem.

For $ a = - \infty, b = +\infty$, for $i\in\{1,2\}$, we have
\begin{equation}
\begin{split}
    \int_{-\infty}^{\infty} \int_{\tau}^T |u_{in}(x,t) &- u_{im}(x,t)| dx dt \\ &\leq (T- \tau) \underbrace{\int_{-\infty}^{\infty}|u_{0in}(x)-u_{0im}(x)|dx,}_{\text{ goes to } 0 \text{ as } n,m \rightarrow \infty, \text{ by hypothesis.}}
\end{split}
\end{equation}
which tells that $\{u_{in}\}$ is a cauchy sequence in $L^1(\re\times(\tau,T))$. Thus, there exists $u_i \in L^1(\re \times (0,T))$ such that $\lim_{n \rightarrow \infty} u_{in}(x,t) = u_i(x,t)$. The $L^1$ contraction property then gives
\begin{equation}\label{eq106}
    \begin{split}
        \int_{-\infty}^{\infty} \int_{\tau}^T |u_1(x,t) &- u_2(x,t)| dx dt \\ &= \lim_{n \rightarrow\infty} \int_{-\infty}^{\infty} \int_{\tau}^T |u_{1n}(x,t) - u_{2n}(x,t)| dx dt \\
        &\leq (T-\tau) \lim_{n \rightarrow\infty}\int_{-\infty}^{\infty}|u_{10n}(x) - u_{20n}(x)| dx \\
        &= (T-\tau) \int_{-\infty}^{\infty}|u_{10}(x) - u_{20}(x)|dx,
    \end{split}
\end{equation}
which proves \cref{eq319} and taking $u_{10} = u_{20}$ in \cref{eq106} gives \cref{eq320}. Finally, we have 
\begin{equation*}
\begin{split}
    \lim_{\tau \rightarrow T}  
    \frac{1}{T - \tau}\int_{\tau}^T\Big(\int_{\re} | u_1(x,t) - u_2(x,t)&| dx \Big)dt =
    \\ &\int_{\re} |u_{1}(x,T) - u_{2}(x,T)|dx.
    \end{split}
\end{equation*}

So, for a.e $T>0$, we have
\[ 
 \int_{\re} |u_{1}(x,T) - u_{2}(x,T)| dx \leq \int_{-\infty}^{\infty}|u_{10}(x) - u_{20}(x)|dx.
\]

From the last part of this theorem, we have for $u_0 \in L^1(\re)$, the function constructed $u$ is in $L^1(\re\times (0,T))$. But, it is not yet clear if $f(u(x,t))$ is well defined and satisfy the equation (\ref{SC1}). We shall prove this in several steps.

Let $u_0 \in L^1(\re)$ and $u_{0n} \in L^{\infty}(\re) \cap L^1(\re)$ such that $u_{0n}$ converges to $u_0$ in $L^1(\re)$ as $n$ goes to infinity. Let $\epsilon >0$ and $\fe$ be as in (\ref{mol}). Let $V_{\epsilon,n}$ be the value function as in (\ref{eq13}) with the flux $\fe$  and the initial data $u_{0n}$. Furethermore, let the corresponding charecteristic set be $Ch_{\epsilon,n}(x,t)$ with $y_{\pm ,\epsilon,n}(x,t)$ as defined in (\ref{eq13}), (\ref{eq1011}). Let $K$ be a compact subset of $\re \times (0,\infty)$. Then, the following holds.

\begin{enumerate}[Step 1:]
\item\label{step1} There exist $C \equiv C(K) >0 $, independent of $\epsilon$ and $n$ such that for any $(x,t) \in K, n>0, y \in Ch_{\epsilon,n}(x,t)$, there holds

\begin{equation}
    |y| \leq C(K).
\end{equation}
\begin{proof}
    Suppose not, then there is a sequence $\epsilon_k \rightarrow 0$, $(x_k,t_k) \in K$, $n_k \rightarrow \infty$, $y_k \in Ch_{\epsilon_k,n_k}(x_k,t_k)$ such that,
    \begin{itemize}
        \item $\lim_{k \rightarrow \infty} (x_k,t_k) = (x_0,t_0) \in K$.
        \item $\lim_{k \rightarrow \infty} |y_k| = \infty$.
    \end{itemize}
    Since, $y_k \in Ch_{\epsilon_k,n_k}(x_k, t_k)$, we see that for all $z \in \re$, there holds
    \begin{equation*}
    \begin{split}
    V_{\epsilon_k,n_k } (x_k ,t_k) & =  v_{0 n_k}(y_k) + t_k f^*_{\epsilon_k} \left( \frac{x_k - y_k}{t_k}\right) \\ & \leq   v_{0 n_k}(z) + t_k f^*_{\epsilon_k}\left( \frac{x_k - z}{t_k}\right), 
    \end{split}
    \end{equation*}
    where, 
    \[ v_{0,n_k}(z) := \int_0^z u_{0, n_k}(\theta) d\theta. \]
    From the convergence of $u_{0,n}$ to $u_0$ in $L^1$, we see that there here exists $k_0 \geq 1$ such that for all $ z$ and for all $k \geq k_0$, we have 
    \[|v_{0,n_k}(z)| \leq \int_{- \infty}^{\infty} |u_{0,n_k}(\theta)| d\theta \leq 2 \int_{- \infty}^{\infty} |u_0(\theta)| d \theta .\]
    Also, note that compact set $K$ lies strictly in the upper half plane, which tells that for all $(x,t) \in K$, the time factor $t$ is strictly bigger than some positive number.  Now, for $k \geq k_0$, evaluating at $z =0$, we have
    \[ t_k f^*_{\epsilon_k}\left( \frac{x_k - y_k}{t_k}\right) \leq 
 2 \lVert u_0\rVert_{L^1} + t_k f^*_{\epsilon_k}\left(\frac{x_k}{t_k} \right). \]
    Letting $k$ going to infinity, we see that 
    \begin{equation}\label{equa108} \lim_{k \rightarrow \infty} f^*_{\epsilon_k} \left( \frac{x_k - y_k}{t_k}\right) \leq \frac{2 \lVert u_0 \rVert_{L^1}}{t_0} + f^*\left(\frac{x_0}{t_0}\right).
    \end{equation}
    Now, $f_\epsilon$ goes to $f$ on compact sets tells that $f_\epsilon$ is uniformly bounded on $[-1,1]$. So, by the definition of the Fenchel dual, there exist $q_0 \geq 1$ such that for $|q| \geq q_0$ and for the particular $p = \frac{q}{|q|}$, there holds
    \[\frac{\fe^*(q)}{|q|} \geq 1 - \frac{\fe\left(\frac{q}{|q|}\right)}{|q|} \geq \frac{1}{2},\]
    or equivalently, there holds
    \begin{equation}\label{equa109}
        \fe^*(q) \geq \frac{1}{2} |q|.
    \end{equation}
Now, $|y_k|$ goes to infinity implies that for $k$ large, we have $\left|\frac{x_k - y_k}{t_k}\right| \geq q$. Along with (\ref{equa108}) and (\ref{equa109}), we have
\begin{equation}
    \begin{split}
    \infty &= \frac{1}{2}\lim_{k \rightarrow \infty} \left| \frac{x_k - y_k}{t_k}\right| \\ &\leq \lim_{k \rightarrow \infty} f^*_{\epsilon_k} \left( \frac{x_k - y_k}{t_k}\right) \\ &\leq \frac{2 \lVert u_0 \rVert_{L^1}}{t_0} + f^*\left(\frac{x_0}{t_0} \right), 
\end{split}
\end{equation}
which is a contradiction.
\end{proof}
\item\label{step2} We have the limit,
\begin{equation}
    \lim_{|q| \rightarrow \infty} \inf_{0<\epsilon<1} \left| f'_{\epsilon}(q) \right| = \infty.
\end{equation}
\begin{proof}
    Since, $f$ has super-linear growth and convex, we see that \begin{equation}\label{equa110} \lim_{|q| \rightarrow \infty} \left| f'(q)\right| = \infty.\end{equation}
By the Dominated Convergence Theorem, we have
\[f'_{\epsilon}(q) = \int_{|y| \leq 1} f'(q - \epsilon y) \alpha(y) dy + 2 \epsilon q. \]
If $q \rightarrow \infty $, by (\ref{equa110}), we see that
\[ \lim_{q \rightarrow \infty} \inf_{  0 < \epsilon <1, |y| \leq 1 } f'(q - \epsilon y) = \infty. \]
Thus, by the Fatou's lemma, there holds
\[\lim_{q \rightarrow \infty} \inf_{0 < \epsilon < 1} f'_{\epsilon}(q) \geq \int_{|y| \leq 1} \left( \liminf_{q \rightarrow \infty} f'(q - \epsilon y)\right) \alpha(y) dy = \infty. \]
Similarly, if $q \rightarrow - \infty$, then from (\ref{equa110}), we have 
\[\lim_{q \rightarrow -\infty} \inf_{0 < \epsilon <1, |y| \leq 1} \left( - f'(q - \epsilon y)\right) = \infty.\]
    and from convexity of $f$ i.e $f'$ is decreasing near $-\infty$, we see that 
\begin{equation}
\lim_{q \rightarrow -\infty} \inf_{0 < \epsilon < 1} \left( - f'_{\epsilon}(q) \right) \geq \int_{|y| \leq 1} \left( \liminf_{q \rightarrow- \infty} -f'(q - 1) \right) \alpha(y) dy = \infty. 
\end{equation} 

\end{proof}
\item\label{step3} Let $u_{\epsilon,n}$ be the solution of the PDE (\ref{SC1}) with the flux $\fe$ and the initial data $u_{0n}$. Then, by the Lax-{O}le\u{\i}nik explicit formula, for $t >0$ and a.e $x \in \re$, we see there exist $y_{+,\epsilon,n}$ (as defined in \cref{eq1011}) such that
\begin{equation}
    f'_{\epsilon} (u_{\epsilon,n}(x,t)) = \frac{x - y_{+,\epsilon,n}(x,t)}{t}
\end{equation}
Let $K \subset \re \times (0,\infty)$ be a compact set. Then, from the (\ref{step1}), there exist $C(K) > 0$ such that for all $0 < \epsilon <1$, $(x,t) \in K$, for all $n$, we have
\begin{equation}
    |\fe ' (u_{\epsilon,n}(x,t))| = \left| \frac{x - y_{+,\epsilon,n}(x,t)}{t}\right| \leq C(K).
\end{equation}
From \cref{3oflem3} of the \cref{lem3}, letting $\epsilon \rightarrow 0$, we obtain the limit $u_{\epsilon,n}(x,t) \rightarrow u_n(x,t)$ in $\mathcal{D}'(\re\times(0,\infty))$ and from \cref{equar95}, $u_n(x,t)$ is in $L^1_{loc}(\re\times(0,\infty))$. From the (\ref{step2}), it is seen that the set $\{ u_{\epsilon,n}(x,t)\}$ is uniformly bounded, for all $(x,t) \in K$, for a fixed $n \in \mathbb{N}$ and for all $\epsilon$ near zero. 

Now, we show that the uniform bound can be taken to be independent of $n$ as well. Let $K \subset \re \times(0,\infty)$ be a rectangle and $\Omega := int(K)$, the interior of the set $K$. Set the terms in the \cref{prop_new} mentioned in the Appendix, as 
\begin{itemize}
    \item $w_k(x,t) \equiv u_{\epsilon,n}(x,t)$,
    \item $w(x,t) \equiv u_n(x,t)$.
\end{itemize}
Since, the function $u_n(x,t)$ is defined as $\frac{\pa V_n}{\pa x}(x,t)$, we have
\[
\lVert u_n\rVert_{L^{\infty}(\re\times(0,\infty))} \leq  Lip(V_n) \lVert u_{0,n}\rVert_{L^{\infty}(\re)}.
\]
So, from the \cref{prop_new}, for all $n \in \mathbb{N}$, we see that 
\begin{equation}\label{u_nconverge}
\lVert u_n\rVert_{L^{\infty}(K)} \leq \sup_{k} \lVert u_{\epsilon,n} \rVert_{L^{\infty}(K)}.
\end{equation}
 
Now, the $L^1-$contractivity tells that the functions $u_n(x,t)$ is cauchy in $L_{loc}^1(\re \times (0,\infty))$ and hence, converges to some function $u(x,t)$ in $L_{loc}^1(\re \times (0,\infty))$. The \cref{u_nconverge} tells that the solution $u:= \lim u_n$ is in $L^{\infty}(K)$. The function $f$ is convex and so is continuous. The fact that the $L^1$ convergence implies there exist a subsequence that converge pointwise almost everywhere tells that there is some subsequence  such that $f(u_{n_k}(x,t))$ converges to $f(u(x,t))$, for a.e $(x,t) \in \re \times \re^+$. The $\{u_n(x,t)\}$ is bounded on $K$ tells that by the dominated convergence theorem, for all $\varphi \in C_c^{\infty}(K)$, we have
\begin{equation}\label{equation113}
\begin{split}
    \int_{\re} \int_0^{\infty} \Big[ u \varphi_t + f(u) \varphi_x \Big] dx dt &= \lim_{k \rightarrow \infty} \int_{\re} \int_0^{\infty} \Big[ u_{n_k} \varphi_t + f(u_{n_k}) \varphi_x\Big] dx dt \\
    & = 0.
    \end{split}
\end{equation}

For the last part of the theorem, fix $n\in \mathbb{N}$ and for $\eta >0$, $\epsilon>0$ and $T>0$, define the function $\varphi(x,t) := A_{\epsilon}(x)B_{\eta}(t)$ by, 
\begin{equation}
A_{\epsilon}(x) := \left\{ \begin{aligned}
    1 \quad &\text{ if } x \in [a,b], \\
    0 \quad &\text{ if } x \notin [a - \epsilon,b+\epsilon],\\
    \frac{x-a+\epsilon}{\epsilon} \quad &\text{ if } x \in [a-\epsilon,a],\\
    \frac{b+\epsilon-x}{\epsilon} \quad &\text{ if } x \in [b,b+\epsilon].
\end{aligned} \right.
\end{equation}
\begin{equation}
    B_{\eta}(t) := \left\{
    \begin{aligned}
        1 \quad &\text{ if } t \in [0,T], \\
        \frac{T+\eta-t}{\eta} \quad &\text{ if } t \in [T , T+\eta], \\
        0 \quad &\text{ if } t \geq T+\eta.
    \end{aligned}\right.
\end{equation}
The above defined $\varphi$ is liphsitz and has compact support. Now, from the weak formulation \cref{weak}, for the solution $u_{n}$ satisfying the conservation laws with the initial data $u_{0n} \in L^{\infty}(\re)$, we have
\begin{equation*}
\begin{split}
\frac{-1}{\eta} \int_T^{T+\eta}&\int_{-\infty}^{\infty} u_{n}(x,t) A_{\epsilon}(x) dx dt \\ &+ \int_0^{T+\eta} \int_{-\infty}^{\infty} f(u_{n}(x,t)) \left(A_{\epsilon}(x)\right)_{x} B_{\eta}(t) dx dt \\
& \quad \quad \quad \quad \quad + \int_{-\infty}^{\infty} u_{0n}(x) A_{\epsilon}(x) dx = 0.
\end{split}
\end{equation*}

As $u_{0n}$ is in $L^{\infty}(\re)$, we have that the function $u_n(x,t)$ to be in $L^{\infty}(\re \times (0,\infty))$. So, by the Lebesgue differentiation theorem and the dominated convergence theorem, for a.e $t>0$ depending on $A_{\epsilon}$, sending $\eta \rightarrow 0$, we have
\begin{equation*}
\begin{split}
\int_{-\infty}^{\infty} &u_{n}(x,t) A_{\epsilon}(x) dx  \\ &= \int_0^{t} \int_{-\infty}^{\infty} f(u_{n}(x,\tau)) \left(A_{\epsilon}(x)\right)_{x} B_{\eta}(\tau) dx d\tau \\
& \quad \quad \quad \quad \quad + \int_{-\infty}^{\infty} u_{0n}(x) A_{\epsilon}(x) dx.
\end{split}
\end{equation*}
Now, let $t \rightarrow 0$ to get
\[
\lim_{t \rightarrow 0}\int_{-\infty}^{\infty}u_n(x,t) A_{\epsilon}(x) dx = \int_{-\infty}^{\infty} u_{0n}(x) A_{\epsilon}(x) dx
\]
Equivalently, there holds
\begin{equation*}
    \begin{split}
        &\lim_{t \rightarrow 0} \left[ \int_a^b u_n(x,t) dx + \int_{a - \epsilon}^a u_n(x,t) A_{\epsilon}(x) dx + \int_{b}^{b+\epsilon}u_n(x,t) A_{\epsilon}(x) dx \right] \\
        &\quad \quad = \int_a^b u_{0n}(x) dx + \int_{a - \epsilon}^a u_{0n}(x,t) A_{\epsilon}(x) dx + \int_{b}^{b+\epsilon}u_{0n}(x,t) A_{\epsilon}(x) dx
    \end{split}
\end{equation*}
Observe that the chosen $A_{\epsilon}$ has the range $[0,1]$. So, let $\epsilon \rightarrow 0$ to obtain
\begin{equation}\label{shoot122}
\lim_{t \rightarrow 0} \int_a^b u_n(x,t) dx = \int_a^b u_{0n}(x) dx.
\end{equation}

Finally as $u_n \rightarrow u$ in $L^1(\re \times (0,T))$, for all $T>0$ and by the $L^1-$contractive property, for a.e $t>0$, we see that
\begin{equation}
    \begin{split}
        \left| \int_a^b u(x,t) dx - \int_a^b u_0(x) dx \right| &\leq \left| \int_a^b  (u(x,t) - u_n(x,t)) dx \right| \\ & \quad \quad + \left|\int_a^b (u_n(x,t) - u_{0n}(x))  dx \right| \\& \quad \quad \quad + \left| \int_a^b (u_0(x) - u_{0n}(x)) dx \right|\\
        &\leq 2 \int_{-\infty}^{\infty} |u_0(x) - u_{0n}(x) | dx \\ &\quad \quad + \left|\int_a^b (u_n(x,t) - u_{0n}(x)) dx\right|.
    \end{split}
\end{equation}
From \cref{shoot122} and the fact that $u_{0n}$ converge to $u_0$ in the $L^1$ norm, letting $t \rightarrow 0$ and $n \rightarrow \infty$, we have
\[ 
\lim_{t \rightarrow 0} \int_a^b u(x,t) dx = \int_a^b u_0(x) dx. \]
This, together with (\ref{equation113}) gives that $u$ is a ``Kru\v{z}kov" solution and this concludes the proof for the second theorem.
\end{enumerate}

\end{proof}
\begin{proof}[\bf{Proof of the Theorem (\ref{thm3}).}]
Looking at the possibilities for $\mu_{\pm}$, we have four cases:
\begin{enumerate}
    \item\label{case1} $\mu_+ = \infty$ and $\mu_- = -\infty$.
    \item\label{case2} $\mu_+ = \infty$ and $\mu_- > -\infty$.
    \item\label{case3} $\mu_+ < \infty$ and $\mu_- = -\infty$.
    \item\label{case4} $\mu_+ < \infty$ and $\mu_- > -\infty$.
\end{enumerate}
The Theorem \ref{thm2} deals with the case \ref{case1}. So, it is now enough to prove for the case \ref{case2} and a similar analysis follows for the cases \ref{case3} and \ref{case4}.So, assume that $\mu_+ = +\infty$ and $\mu_- > -\infty$. Also for $n \geq 1$, let $p_n \in (-n-1,-n)$ such that the function $f$ is differentiable at $p_n$. Furthermore, let $f_n$ be the mollification of $f$ at $A = p_n$ and $B = \infty$ as mentioned in \cref{equation117} in the appendix. Also, let $u_0$ be a function in $L^1(\re)$ and define
\begin{equation}
       u_{n0}(x) := \left\{ \begin{aligned}
            &u_0(x) &\text{ if } u_0(x) \geq -n \\
            &0 &\text{ } \text{otherwise.}, \\
        \end{aligned}\right.
\end{equation}
Then, from the Theorem \ref{thm1}, there exist a solution $u_n$ of (\ref{weak}) satisfying $\lVert u_n \rVert_{\infty} \leq \lVert u_{n0}\rVert_{\infty} $. Now, from (\cref{theorem2.1}) of the \cref{thm2}, we have $u_n(x,t) \geq -n$ for a.e $(x,t) \in \re \times(0,\infty)$. Hence, for $m>n$, we have
\begin{equation}\label{equation100000}
    f_n\left(u_n\left(x,t\right)\right) = f_m\left(u_n\left(x,t\right)\right),
\end{equation}
which tells that the functions $u_n$ and $u_m$ are solutions for the same flux $f_m$. Thus, by the $L^1-$contractivity, for $0 < \tau < T$, there holds
\begin{equation}\label{equation119}
    \int_{-\infty}^{\infty} \int_{\tau}^{T} |u_n(x,t) - u_m(x,t)| dx dt \leq \underbrace{(T-\tau) \int_{-\infty}^{\infty}|u_{n0}(x) - u_{m0}(x)| dx.}_{\text{goes to }0 \text{ as }m,n \rightarrow\infty}
\end{equation}
Thus, we have that the functions $\{u_n\}$ to be cauchy in $L^1_{loc}(\re\times(0,\infty))$. Now, as $u_{n+1,0}(x) \leq u_{n0}(x)$ holds, from the part (\ref{prev_point}) of the \cref{thm2}, we see that 
    \begin{equation}\label{wequation121} 
    u_{n+1}(x,t) \leq u_n(x,t)
    \end{equation}
Define 
\begin{equation}\label{equation123221}
    u(x,t) := \lim_{n \rightarrow \infty} u_n(x,t)
\end{equation} Furthermore, let $u_0$ and $\widetilde{u_0}$ be functions in $L^1(\re)$ and set $u(x,t)$ and $\widetilde{u}(x,t)$ to be as in \cref{equation123221}. Then, from \cref{equation100000}, it follows that
\begin{equation}\label{equation121}
\begin{split}
    \int_{-\infty}^{\infty}\int_{\tau}^T |u(x,t) - \widetilde{u}(x,t)| dx dt &\leq \lim_{n \rightarrow \infty} \int_{-\infty}^{\infty}\int_{\tau}^T |u_n(x,t) - \widetilde{u_n}(x,t)| dx dt\\
    &\leq (T-\tau) \lim_{n \rightarrow\infty}\int_{-\infty}^{\infty} |u_{n0}(x) - \widetilde{u_{n0}}(x)| dx \\
    &\leq (T-\tau) \int_{-\infty}^{\infty}|u_0(x) - \widetilde{u_0}(x)|dx.
\end{split}
\end{equation}

As in the earlier proof, we have
\[
\lim_{\tau \rightarrow T } \frac{1}{T - \tau}\int_{-\infty}^{\infty}\int_{\tau}^T |u(x,t) - \widetilde{u}(x,t)| dx dt = \int_{-\infty}^{\infty } |u(x,T) - \widetilde{u}(x,T)| dx dt.
\]
Along with \cref{equation121}, we see that \cref{call3.1} is established.
Now, as $u_{n0}(x) \geq -n$, we have that $u_n(x,t) \geq -n$ for a.e $(x,t) \in \re\times(0,\infty)$ and so there holds
\[f\left(u_n(x,t)\right) = f_n\left(u_n(x,t)\right).\]
Hence, for a compact set $K\subset\re\times(0,\infty)$ and for any $\varphi \in C_c^{\infty}(K)$, we have
\begin{equation}
        \int_{-\infty}^{\infty} \int_0^{\infty} \left( u_n \varphi_t + f(u_n)\varphi_x\right) dx dt =\underbrace{ \int_{-\infty}^{\infty} \int_0^{\infty} \left( u_n \varphi_t + f_n(u_n)\varphi_x\right) dx dt.}_{\text{equals }0}
\end{equation}

Since $u_- \leq 0 \leq u_+ = \infty$, $|u_-| <\infty$, there exist $\alpha > 0, \beta > 0$ such that
\begin{equation}\label{equationew124}
  f(p) \leq \left\{ 
  \begin{aligned}
  &\alpha + \beta |p| , \quad \text{ if } p \leq 0 \\
  & \alpha + f(p), \quad \text{ if } p\geq 0.
  \end{aligned}
  \right.
\end{equation}
Set 
\[ E_- := \{ (x,t) \in \re\times(0,\infty) ; u(x,t) \leq 0\}\]
\[ E_+ := \{ (x,t) \in \re\times(0,\infty) ; u(x,t) \geq 0\}\]

Let $K \subset \re \times (0,\infty)$ be a compact set. As in the proof of \cref{thm2}, since $u_+ = \infty$, there exist a constant $C(K)\geq 0$ such that

\[ |u_1(x,t) | \leq C(K), \quad \forall (x,t) \in E_+.\]

Now, $u(x,t) \leq u_1(x,t)$, we have 
\begin{equation}\label{equationew125}
    |u(x,t)| \leq C(K), \quad \text{ for $(x,t) \in E_+$.}
\end{equation} 

The \cref{equationew124} gives that for a.e $(x,t) \in \re \times (0,\infty)$, we have
\begin{equation}\label{equationew126}
  |f(u(x,t))| \leq \left\{ 
  \begin{aligned}
  &\alpha + \beta |u(x,t)| , \quad \text{ if } (x,t) \in E_- ,\\
  & \alpha + |f(u(x,t))|, \quad \text{ if } (x,t) \in E_+.
  \end{aligned}
  \right.
\end{equation}

The \cref{equationew125} tells that there exist a $\tau \geq \beta$ such that 
\begin{equation}\label{equationew127}
    |f(u(x,t))| \leq \alpha + \tau | u(x,t)|.
\end{equation}

The function $u$ is in $L^1(\re \times (0,T))$, for all $T>0$ tells by the Dominated Convergence theorem, that for all $\varphi \in C_c^{\infty}(K)$, there holds

\begin{equation}
\begin{split}
    \int_{-\infty}^{\infty} \int_0^{\infty} \left[ u \varphi_t + f(u) \varphi_x\right] dx dt  &= \lim_{n \rightarrow \infty} \int_{-\infty}^{\infty} \int_0^{\infty} \left[ u_n \varphi_t + f(u_n) \varphi_x\right] dx dt \\
    &= \lim_{n \rightarrow \infty} \int_{-\infty}^{\infty} \int_0^{\infty} \left[ u_n \varphi_t + f_n(u_n) \varphi_x\right] dx dt\\
    & = 0,
\end{split}
\end{equation}
 which is true by the fact that $f(u_n) = f_n(u_n)$.

Finally, as in the \cref{thm2}, for $a<b$, we have
\[\lim_{t\rightarrow0}\int_a^b u(x,t) dx = \int_a^b u_0(x) dx,\]
which concludes the proof for the third theorem.
\end{proof}

\section*{\bf Appendix.}

\begin{enumerate}
    \item{\bf{Mollification to Super Linear Growth.}}\label{app1} For $f :\re \mapsto \re$, a convex function, $f$ is differentiable a.e. Let $A<B$, be two points in $\re$ where $f$ is differentiable at. Let $D >0$ and set 
    \begin{equation}\label{equation117}
       g(x) := \left\{ \begin{aligned}
            &f(p) &\text{ if } A \leq p \leq B, \\
            &f(A) + f'(A) (p-A) + D(p-A)^2 &\text{ if } p\leq A, \\
            &f(B) + f'(B) (p-B) + D(p-B)^2 &\text{ if } p\geq B,
        \end{aligned}\right.
    \end{equation}
Then the function $g$ has the following properties:
\begin{itemize}
    \item The function $g$ has superlinear growth.
    \item The function $g$ is convex.
    \item There holds the equality $g(x) = f(x)$, for $x \in [A,B]$.
\end{itemize}
    \item \label{proflem1}
    \begin{proof}[\bf{Proof of the Lemma (\ref{lem1}).}]
    As in the definition of $f^*$,
    \[f^*(q) := \sup \{ p.q - f(p) ; p \in \re \},\]
    we see that for any $p,q \in \re$, there holds
    \[ f^*(q) \geq p.q - f(p).\]
    Normalising the quantities, we get
    \[ \frac{f^*(q)}{|q|} \geq p . \frac{q}{|q|} - \frac{f(p)}{|q|},\]
    which tells that
    \[ \lim_{|q| \rightarrow \infty} \frac{f^*(q)}{|q|} \geq p.w \quad \text{for all p}, \]
    where $w \in \{ -1, +1\}$. Sending $p.w$ to infinty gives the superlinearity of $f^*$,\[\lim_{|q| \rightarrow \infty} \frac{f^*(q)}{|q|} = \infty . \]
    The function $f$ is superlinear implies \begin{equation*}
        \begin{split}
            p.q - f(p) &= |p| \left ( \frac{p}{|p|}q - \frac{f(p)}{|p|}\right) \\
            &\leq \underbrace{|p| \left(|q| - \frac{f(p)}{|p|}\right),}_{ \text{goes to $-\infty$ as $|p|$ goes to infinity. }}
        \end{split}
    \end{equation*}
    which tells that there exists $p_0 \geq 0$ such that
    \begin{equation*}
        f^*(q) = \sup_{p \in \re} \{ p.q - f(p)\} \leq \sup_{|p| \leq p_0} \{ p.q - f(p)\},
    \end{equation*}
    and so, $f^*(q) < \infty$, for all $q \in \re$.
    
    Let $f_{\epsilon}$ satisfy the assumptions (\ref{it23}). Let $M>0$ and $|q| \leq M$, then we have
    \begin{equation}
        \begin{split}
            p.q - \fe(p) &=  |p| \left ( \frac{p}{|p|}q - \frac{\fe(p)}{|p|}\right) \\
            &\leq  |p| \left ( M - \frac{\fe(p)}{|p|}\right),
        \end{split}
    \end{equation}
    and so we see that
    \begin{equation*}
        \lim_{|p| \rightarrow \infty} \sup_{0 < \epsilon \leq 1}\{ p.q - \fe(p)\}\leq \lim_{|p| < \infty} |p| \left\{ M - \inf_{0 < \epsilon \leq 1}\frac{\fe(p)}{|p|}\right\}.
    \end{equation*}
    Thus, there exists $p_0(M)$ independent of $\epsilon$, such that for all $\epsilon \in (0,1]$, $|q| \leq M$, there holds
    \[\fe^*(q) = \sup_{|p| \leq p_0}\left\{p.q - \fe (p)\right\}.\]
    Again as $f$ is superlinear, by similar arguement, there exists $p_1 > 0$
    \[f^*(q) = \sup_{|p| \leq p_1}\left\{p.q - f (p)\right\}.\]
    Set $p_2 := \max\{ p_0, p_1\}$. Then, for $|q| \leq M$, we have
    \[\fe^*(q) = \sup_{|p| \leq p_2}\left\{p.q - \fe (p)\right\},\]
    \[f^*(q) = \sup_{|p| \leq p_2}\left\{p.q - f (p)\right\}.\]
    So, by continuity and compactness, for all $|q| \leq M$, there exists $q_1 = q_1(q), q_2 = q_2(q)$ such that $|q_1| \leq p_2 $, $|q_2| \leq p_2$ and 
    \[\fe^*(q) = q_1 q - \fe(q_1),\]
    \[ f(q) = q_2 q - f(q_2).\]
    Hence, for all $p \in \re$, we have
    \[\fe^*(q) - f(q) \leq q_1 q -\fe(q_1) - pq +f(p).\]
    Setting $p = q_1$, we have
    \[\fe^*(q) - f^*(q)\leq f(q_1) - \fe(q_1) \leq \sup_{|p| \leq p_2}|f(p) - \fe(p)|.\]
    Interchanging $f \leftrightarrow \fe$, for all $|q| \leq M$, there holds
    \[ |\fe^*(q) - f^*(q)| \leq  \sup_{|p| \leq p_2}|f(p) - \fe(p)|.\]
    Hence, $\fe^*$ converges to $f^*$ on compact sets uniformly. Furthermore, for all $p \in \re$, we have
    \[\frac{\inf_{0<\epsilon\leq 1}\fe^*(q)}{|q|} \geq \frac{pq}{|q|}- \sup_{0 < \epsilon \leq 1}\frac{\fe^*(p)}{|q|}. \]
    So, for $w \in \{-1,+1\}$, with $\frac{q}{|q|} \rightarrow w$ says
    \[\lim_{|q| \rightarrow \infty}\inf_{0 < \epsilon \leq 1}\frac{\fe^*(q)}{|q|} \geq pw. \]
    Letting $pw$ to go to infinity, we obatin
    \[ \lim_{|q| \rightarrow \infty}\inf_{0 < \epsilon \leq 1}\fe^*(q) = \infty.\]
    This proves the first three parts of the lemma. For the last part of the lemma, define a new function
    \[ F_{\epsilon}(x) := \left(\alpha_{\epsilon}*F\right)(x) + \epsilon x^2.\]
    The function $F\ast \alpha_{\epsilon}$ is smooth and convex as $F$ is convex and hence $(F\ast \alpha_{\epsilon})'' \geq 0$. Equivalently, there holds
    \[F_{\epsilon}''(x) \geq \epsilon >0,\]
    i.e $F_{\epsilon}$ is uniformly convex. Let $\alpha \in C_c^{\infty}(B(0,1))$. As $F$ as has superlinear growth and is convex, there exist $q_0 >0$ such that
    \begin{itemize}
        \item $F(p) > 0$, for all $|p| \geq q_0$.
        \item The function $F(p)$ is non decreasing for $p > q_0$.
        \item The function $F(p)$ is non increasing for $p < - q_0$
    \end{itemize}
    Hence, for all $x \geq q_0 + 1$, for all $0<\epsilon \leq 1$, $|y|\leq 1$, we have
    \[ F(x - \epsilon y) \geq F(x-1),\]
    and for all $x \leq -q_0 - 1$, for all $0<\epsilon \leq 1$, $|y|\leq 1$, we have
    \[ F(x - \epsilon y) \geq F(x+1).\]
    Taking the limits, we get
    \[ \lim_{x \rightarrow\infty}\inf_{0<\epsilon \leq 1} \frac{F(x - \epsilon y )}{|x \pm 1|} \geq \lim_{x \rightarrow \infty } \frac{F(x \pm 1)}{|x \pm 1|} = \infty. \]
    So, there holds
    \begin{equation*}
        \begin{split}
            \lim_{x \rightarrow\infty}\inf_{0<\epsilon \leq 1}\left(\frac{F_{\epsilon}(x)}{x}\right) &\geq \lim_{x \rightarrow\infty}\inf_{0<\epsilon \leq 1} \int_{|y| \leq 1} \frac{F(x - \epsilon y)}{x} \alpha(y) dy \\
            &\geq  \underbrace{\lim_{x \rightarrow\infty}\int_{|y| \leq 1} \frac{F(x-1)}{|x-1|}\frac{|x-1|}{|x|}\alpha(y) dy.}_{ = \infty}
        \end{split}
    \end{equation*}
    Similarly, we get 
    \[ \lim_{x \rightarrow\infty}\inf_{0<\epsilon \leq 1}\left(\frac{F_{\epsilon}(x)}{-x}\right) \leq  \underbrace{\lim_{x \rightarrow\infty}\int_{|y| \leq 1} \frac{F(x-1)}{|x-1|}\frac{|x-1|}{|x|}\alpha(y) dy.}_{ = \infty}\]
    This concludes the proof for the lemma.
    
    \end{proof}

\item {\begin{Proposition}\label{prop_new}
     Let $\Omega \subset \re^n$ be a bounded open set. Furthermore, let $\{ w_k\} \subset L^{\infty}(\Omega)$ and let $w \in L^{\infty}(\Omega)$. Also, let $M_1 >0$ and $M_2 >0$ be two constants such that for all $k \in \mathbb{N}$, there holds
    \begin{itemize}
        \item\label{prop_new1.1} $\lVert w_k \rVert_{L^{\infty}(\Omega)} \leq M_1,$
        \item\label{prop_new1.2} $\lVert w \rVert_{L^{\infty}(\Omega)} \leq M_2$.
    \end{itemize}
    Moreover assume $\varphi \in C_c^1(\Omega)$, there holds
    \begin{equation}\label{prop_new2}
    \lim_{k \rightarrow \infty} \int_{\Omega} w_k(x) \varphi(x) dx = \int_{\Omega} w(x)\varphi(x) dx.
    \end{equation}

Then, we see that
\begin{equation}\label{prop_new3}
    \lVert w \rVert_{L^{\infty}(\Omega)} \leq M_1.
\end{equation}
(Also, refer \cite{Rudin_functional}).
\end{Proposition}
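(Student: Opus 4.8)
The plan is to deduce the bound from the duality $\left(L^1(\Omega)\right)^{*} = L^{\infty}(\Omega)$, which applies because $\Omega$ is bounded, so Lebesgue measure restricted to $\Omega$ is finite (see \cite{Rudin_functional}). Concretely, it suffices to prove
\[
\left| \int_{\Omega} w(x)\, \psi(x)\, dx \right| \le M_1 \int_{\Omega} |\psi(x)|\, dx \qquad \text{for every } \psi \in L^1(\Omega),
\]
since taking the supremum over $\psi$ with $\lVert \psi \rVert_{L^{\infty}(\Omega)}$—I mean $\lVert \psi \rVert_{L^1(\Omega)} \le 1$—then yields precisely $\lVert w \rVert_{L^{\infty}(\Omega)} \le M_1$, which is \eqref{prop_new3}.

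First I would verify the estimate on the admissible test functions: for $\varphi \in C_c^1(\Omega)$, the convergence \eqref{prop_new2} together with the uniform bound $\lVert w_k \rVert_{L^{\infty}(\Omega)} \le M_1$ gives
\[
\left| \int_{\Omega} w\,\varphi\, dx \right| = \lim_{k\to\infty}\left| \int_{\Omega} w_k\,\varphi\, dx \right| \le \limsup_{k\to\infty} \lVert w_k \rVert_{L^{\infty}(\Omega)}\, \lVert \varphi \rVert_{L^1(\Omega)} \le M_1\, \lVert \varphi \rVert_{L^1(\Omega)}.
\]
Then I would upgrade this to arbitrary $\psi \in L^1(\Omega)$ by density: choose $\varphi_j \in C_c^{\infty}(\Omega) \subset C_c^1(\Omega)$ with $\varphi_j \to \psi$ in $L^1(\Omega)$ (standard, by mollifying and trimming the support away from $\partial\Omega$, using $|\Omega|<\infty$); the reverse triangle inequality then forces $\lVert \varphi_j \rVert_{L^1(\Omega)} \to \lVert \psi \rVert_{L^1(\Omega)}$. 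Since $w \in L^{\infty}(\Omega)$ with $\lVert w \rVert_{L^{\infty}(\Omega)} \le M_2 < \infty$—this is the one place the second hypothesis enters—we have $\left| \int_{\Omega} w(\varphi_j - \psi)\, dx \right| \le M_2 \lVert \varphi_j - \psi \rVert_{L^1(\Omega)} \to 0$, hence $\int_{\Omega} w\,\varphi_j\, dx \to \int_{\Omega} w\,\psi\, dx$. Passing to the limit in $j$ in the previous display produces the displayed estimate for $\psi$, and the proof is complete.

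I expect the only genuine subtlety—and the point worth stating explicitly—to be that the functions against which we may test in \eqref{prop_new2} are merely $C_c^1$, so one cannot simply insert the indicator $\mathbf{1}_E$ of a superlevel set $\{|w| > M_1\}$ and be done; the density step above is exactly what bridges that gap. The finiteness of $|\Omega|$ is used twice (for the duality identification $\left(L^1\right)^{*}=L^{\infty}$ and for the $L^1$-density of $C_c^{\infty}(\Omega)$ together with $L^{\infty}(\Omega)\subset L^1(\Omega)$), and beyond that the argument is a routine triangle-inequality passage to the limit.
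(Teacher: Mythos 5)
Your proof is correct and follows essentially the same route as the paper's: both rest on the density of $C_c^1(\Omega)$ in $L^1(\Omega)$, use the bound $\lVert w\rVert_{L^\infty}\le M_2$ to control the approximation error, and conclude via the duality $\left(L^1(\Omega)\right)^*=L^\infty(\Omega)$. The only cosmetic difference is that you verify the estimate $\left|\int_\Omega w\varphi\right|\le M_1\lVert\varphi\rVert_{L^1}$ on the dense class and then pass to the limit, whereas the paper first extends the weak$^*$ convergence to all of $L^1(\Omega)$ and then invokes Banach--Alaoglu to place the limit functional in the ball of radius $M_1$; these are the same duality argument in a different order.
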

\begin{proof}
    By the regularity of the Lebesgue measure, we have that the space $C_c^1(\Omega)$ is dense in the space $L^1(\Omega)$. Hence, for all $f \in L^1(\Omega)$ and for all $\eta >0$, there exist $\varphi \in C_c^1(\Omega)$ such that 
    \begin{equation}
        \int_{\Omega} | f-\varphi| dx < \eta.
    \end{equation}
    Now, from the hypothesis of the proposition, we see that
    \begin{equation}
        \begin{split}
            \left| \int_{\Omega} \left(w_k - w\right) f dx \right| &= \left| \int_{\Omega}\left(w_k - w\right) \varphi dx  \right| + \left| \int_{\Omega} \left( w-w_k\right) \left(f - \varphi\right) dx \right| \\
            & \leq \left|\int_{\Omega} \left(w_k - w\right) \varphi dx  \right| + (M_1 + M_2) \lVert f - \varphi \rVert_{L^1(\Omega)} \\
            & \leq \left| \int_{\Omega} w_k \varphi - \int_{\Omega} w \varphi \right| + \eta (M_1 + M_2).
        \end{split}
    \end{equation}
Now, sending $k \rightarrow \infty$, from \cref{prop_new2}, we obtain 
\begin{equation}\label{prop_new5}
    \lim_{k \rightarrow \infty} \int_{\Omega} w_k f = \int_{\Omega} w f dx.
\end{equation}
Define new functions $l_k$ and $l$ in the dual space $L^1(\omega)^*$ by
\[
    l_k(f) := \int_{\Omega} w_k f dx \quad \text{and} \quad l(f) := \int_{\Omega} w f dx.
\]
Then, from \cref{prop_new5} and the hypothesis, we have
\begin{itemize}
    \item $|l_k(f)| \leq M_1 \lVert f \rVert_{L^1(\Omega)}$, equivalently, the operator norm $\lVert l_k \rVert$ is bounded by $M_1$,
    \item $|l(f)| \leq M_2 \lVert f \rVert_{L^1(\Omega)}$, equivalently, the operator norm $\lVert l \rVert$ is bounded by $M_2$,
    \item $l_k(f) \rightarrow l(f)$, for all $f$ in $L^1(\Omega)$, i.e $l_k$ converges to $l$ weakly in $L^1(\Omega)^*$.
\end{itemize}

Now, Banach-Alaoglu's theorem tells that the closed ball $\overline{B(0,M_1)}$ in $L^1(\Omega)^*$ is weakly compact. As $l_k$ is in $\overline{B(0,M_1)}$, for all $k$, we have that $l \in \overline{B(0,M_1)}$,  which concludes that
\begin{equation*}
\lVert w \rVert_{L^{\infty}(\Omega)} \leq M_1.
\end{equation*}
\end{proof}}
\end{enumerate}

\bibliographystyle{alpha}
\bibliography{refer}

\end{document}